\documentclass[hidelinks,onefignum,onetabnum]{siamart251216}

\newcommand{\KL}{\operatorname{KL}}

\newcommand{\TV}{\mathrm{TV}}
\newcommand{\JSD}{\operatorname{JSD}}

\usepackage{lipsum}
\usepackage{amsfonts}
\usepackage{graphicx}
\usepackage{epstopdf}
\usepackage{algorithmic}
\ifpdf
  \DeclareGraphicsExtensions{.eps,.pdf,.png,.jpg}
\else
  \DeclareGraphicsExtensions{.eps}
\fi

\newsiamremark{remark}{Remark}
\newsiamremark{hypothesis}{Hypothesis}
\crefname{hypothesis}{Hypothesis}{Hypotheses}
\newsiamthm{claim}{Claim}
\newsiamremark{fact}{Fact}
\crefname{fact}{Fact}{Facts}

\headers{Onsager Operators for Finite-State Bridges}
{A. Benabdallah and M. Dlala}

\title{Nonlocal Onsager Operators and Entropy Dissipation for Finite-State Schrödinger Bridges\thanks{Submitted to SIAM Journal on Mathematics of Data Science.}}

\author{
Abdallah Benabdallah\thanks{Higher Institute of Computer Science and
Multimedia of Sfax, University of Sfax, Sfax, Tunisia.
\email{abdellah.benabdallah@isims.usf.tn}}
\and
Mohsen Dlala\thanks{Department of Mathematics, Qassim University, Buraydah,
Saudi Arabia; and Department of Mathematics, Sfax Preparatory Engineering
Institute, University of Sfax, Sfax, Tunisia.
\email{3862@qu.edu.sa}}
}

\usepackage{amsopn}

\ifpdf
\hypersetup{
  pdftitle={An Example Article},
  pdfauthor={D. Doe, P. T. Frank, and J. E. Smith}
}
\fi

\begin{document}

\maketitle

\begin{abstract}
We investigate the Schrödinger bridge problem on a finite state space with a
strictly positive Markov reference kernel. Starting from the semi-dual convex
formulation, we introduce a gauge-fixed logarithmic flow on the terminal
Schrödinger potential and show that its equilibria coincide with the unique
solution of the bridge problem.

The induced terminal marginal evolution is the {\bf Schrödinger Bridge
Onsager Flow} (SBOF). This marginal equation is governed by a
state-dependent nonlocal Onsager operator, identified with the Hessian of the
semi-dual functional. We derive its associated Dirichlet form, establish
coercivity estimates on the appropriate gauge-fixed space, and interpret the
resulting equation as a nonlocal gradient-flow formulation of relative
entropy.

Under natural positivity assumptions, we prove global well-posedness of the
potential flow, convergence to the Schrödinger bridge, convergence of the
induced couplings and path measures, and exponential relaxation of the
terminal marginal. The latter follows from a uniform Poincaré inequality on
compact sublevel sets together with entropy--variance comparison estimates.
We also discuss the connection with finite-state generative modeling through
the Doob transform and illustrate the theory on finite-grid examples
involving rare states.
\end{abstract}
\begin{keywords}
Schrödinger bridge, entropic optimal transport, Onsager operator,
nonlocal Dirichlet form, entropy dissipation, finite Markov chains
\end{keywords}

\begin{AMS}
49Q22, 60J27, 47J30, 94A17, 82C31
\end{AMS}

\section{Introduction}
\label{sec:Int}

Schrödinger bridge problems (SBP) provide a variational formulation for
selecting a stochastic evolution between prescribed endpoint distributions \cite{schrodinger1931umkehrung}.
Given a reference Markov process, the Schrödinger bridge (SB) is the path
measure of minimal relative entropy among all path laws with fixed initial
and terminal marginals \cite{jamison1975markov,follmer1985time}.  This variational problem connects
stochastic control, reciprocal processes, and entropic optimal transport
(EOT) through an entropy projection on path space
\cite{leonard2014survey,chen2016relation, nutz2021lectures}.

The finite-state setting deserves separate attention
\cite{jamison1975markov,georgiou2015positive,chen2016relation, jamison1975markov,pavon2010schrodinger,chow2021dynamical}. Although the dynamic
SBP reduces exactly to an EOT problem over endpoint couplings, a finite state
space does not come with a unique canonical differential structure: different choices of graph, generator, or transport metric lead to different
notions of gradients, divergences, mobilities, and curvature
\cite{maas2011gradient,erbar2012ricci,fathi2018curvature,
kamtue2024entropic}. Here we do not impose such a
structure externally; rather, we identify the operator structure generated by
the semi-dual parametrization of the finite-dimensional EOT problem.

Let $N \in \mathbb N^{*}$ and $S = \{1, \ldots, N\}$  a finite state space. Let \(Q \in \mathbb R^{N\times N}\) be the generator
of a reference continuous-time Markov chain (CTMC) on $S$. We denote by \(K_t=e^{tQ}\),
\(t\in[0,1]\), its Markov semigroup, and set \(K=K_1=e^Q\). Given strictly positive endpoint laws
\(\mu,\nu\in\mathcal P(S)\), the static SBP is
\begin{align}\label{eott}
\inf_{\pi\in\Pi(\mu,\nu)}
\KL(\pi\,\|\,\mu K).
\end{align}
The finite-dimensional entropy projection (\ref{eott}) is closely related to matrix
scaling, iterative proportional fitting (IPF), and Sinkhorn algorithms
\cite{sinkhorn1967,csiszar1975divergence,csiszar1984information,
ruschendorf1995,peyre2019computational}. For recent semigroup and
operator-theoretic perspectives on Sinkhorn stability, see
\cite{delmoral2026new}. The same problem admits a semi-dual
formulation in terms of a single terminal potential, as is standard in
EOT and recent potential-based SB
solvers
\cite{cuturi2019semidual,nutz2021lectures,leger2021gradient,
korotin2024light,gushchin2024light}.

The purpose of this paper is to identify the dissipative geometry generated
by this semi-dual formulation. For \(g:S\to\mathbb R\), define
\begin{equation}
\label{eq:intro-J}
\mathcal J(g)
=
\sum_{x\in S}\mu(x)
\log\!\left(
\sum_{y\in S}K(x,y)e^{g(y)}
\right)
-
\sum_{y\in S}\nu(y)g(y).
\end{equation}
Associated with \eqref{eq:intro-J} are the tilted conditional law and its
terminal marginal,
\begin{equation}
\label{eq:intro-pg-mg}
p_g(y|x)
=
\frac{K(x,y)e^{g(y)}}{\sum_{z\in S}K(x,z)e^{g(z)}},
\qquad
m_g(y)=\sum_{x\in S}\mu(x)p_g(y|x).
\end{equation}
A direct computation using \eqref{eq:intro-pg-mg} gives the fundamental
identity
\begin{equation}
\label{eq:intro-gradient}
\nabla\mathcal J(g)=m_g-\nu.
\end{equation}
Thus, by \eqref{eq:intro-gradient}, Schrödinger potentials solve the
nonlinear marginal equation \(m_g=\nu\), modulo the additive freedom
\(g\sim g+c\mathbf 1\). We study a continuous-time dynamics on terminal potentials
\(g\in\mathbb R^S\) that provides a gradient-flow-type relaxation of this
semi-dual marginal equation. 

The dynamics is posed on the gauge-fixed space
\begin{equation}
\label{eq:intro-gauge-space}
\mathcal H
=
\left\{
g\in\mathbb R^S:\mathbb E_\nu[g]=0
\right\}.
\end{equation}
On \(\mathcal H\), we introduce the following gauge-fixed logarithmic
potential flow:
\begin{equation}
\label{eq:intro-sbof}
\dot g_t
=
-\log\frac{m_{g_t}}{\nu}
+
\mathbb E_\nu\!\left[
\log\frac{m_{g_t}}{\nu}
\right]\mathbf 1 .
\end{equation}
The correction term in \eqref{eq:intro-sbof} preserves the gauge condition
in \eqref{eq:intro-gauge-space}. Moreover, the semi-dual functional is
dissipated along the potential flow:
\begin{equation}
\label{eq:intro-J-dissipation}
\frac{d}{dt}\mathcal J(g_t)
=
-\KL(m_{g_t}\|\nu)
-\KL(\nu\|m_{g_t})
\le 0.
\end{equation}
The dissipation identity \eqref{eq:intro-J-dissipation} shows that
\(\mathcal J\) is a Lyapunov functional for the potential dynamics (\ref{eq:intro-sbof}), up to the
irrelevant addition of a constant.

The central object of the paper is the nonlocal Onsager operator
\(\mathsf K_g\). It arises by linearizing the Schrödinger marginal map
\(g\mapsto m_g\), and equivalently as the Hessian of the semi-dual
functional:
\begin{equation}
\label{eq:intro-Kg}
\mathsf K_g
=
Dm_g
=
\nabla^2\mathcal J(g).
\end{equation}
The identity \eqref{eq:intro-Kg} is the starting point for the Onsager
interpretation developed below. Equivalently, for every \(f:S\to\mathbb R\),
\begin{equation}
\label{eq:intro-quadratic}
\langle f,\mathsf K_g f\rangle
=
\sum_{x\in S}
\mu(x)\operatorname{Var}_{p_g(\cdot|x)}(f).
\end{equation}
The covariance formula \eqref{eq:intro-quadratic} implies that
\(\mathsf K_g\) is symmetric positive semidefinite, annihilates constants,
and is strictly positive on \(\mathcal H\). The associated quadratic form can
be written as a nonlocal Dirichlet form on the terminal state space:
\begin{equation}
\label{eq:intro-dirichlet}
\langle f,\mathsf K_g f\rangle
=
\frac12
\sum_{y,z\in S}
A_g(y,z)(f(y)-f(z))^2,
\end{equation}
where
\begin{equation}
\label{Action}
A_g(y,z)
=
\sum_{x\in S}
\mu(x)p_g(y|x)p_g(z|x).
\end{equation}
Equations \eqref{eq:intro-dirichlet} and \eqref{Action} exhibit
\(\mathsf K_g\) as a nonlocal Dirichlet form with bridge-induced
conductances \(A_g(y,z)\).

This representation is reminiscent of the Dirichlet forms and transport
metrics used in discrete gradient-flow and entropic Ricci-curvature theories
for Markov chains
\cite{maas2011gradient,mielke2011geodesic,erbar2012ricci,
erbar2014gradient,fathi2016entropic,erbar2016ricci,erbar2017poincare}.
However, in the present setting the conductances \(A_g(y,z)\) are not
prescribed by an external graph or by a fixed reversible generator. They are
generated by the current Schrödinger conditional law \(p_g(\cdot|x)\) and
therefore depend on the terminal potential \(g\). Along the potential flow (\ref{eq:intro-sbof}),
the weights \(A_{g_t}(y,z)\) evolve with time, so the induced marginal
dynamics takes place on a bridge-dependent family of nonlocal weighted
graphs.

Passing from potentials to terminal marginals, set \(m_t=m_{g_t}\). By the
chain rule and the identity \(Dm_g=\mathsf K_g\), the potential flow
\eqref{eq:intro-sbof} induces the marginal evolution
\begin{equation}
\label{eq:intro-marginal}
\dot m_t
=
-\mathsf K_{g_t}
\log\frac{m_t}{\nu}.
\end{equation}
Since
\[
\frac{\delta}{\delta m}\KL(m\|\nu)
=
\log\frac{m}{\nu}+\mathbf 1,
\]
and since \(\mathsf K_{g_t}\mathbf 1=0\), this can be written in the
Onsager form
\begin{align}\label{SBOFFFF}
\dot m_t
=
-\mathsf K_{g_t}
\frac{\delta}{\delta m}\KL(m_t\|\nu).
\end{align}
We refer to this induced marginal equation (\ref{SBOFFFF}) as the Schrödinger Bridge
Onsager Flow (SBOF). Thus the terminal marginal evolves as an
entropy-dissipating gradient dynamics with respect to the bridge-induced,
state-dependent mobility \(\mathsf K_{g_t}\).

 Consequently,
\begin{align}\label{eq:intro-entropy-dissipation}
\frac{d}{dt}\KL(m_t\|\nu)
=&
-
\left\langle
\log\frac{m_t}{\nu},
\mathsf K_{g_t}
\log\frac{m_t}{\nu}
\right\rangle, \notag\\
=&-
\mathcal E_{g_t}\!\left(\log\frac{m_t}{\nu}\right).
\end{align}
The Dirichlet form \(\mathcal E_g\) therefore plays the role of the
nonlocal Fisher information, or entropy-production functional, associated
with the induced SBOF (\ref{SBOFFFF}).

This makes the time-dependent family \(\{\mathsf K_{g_t}\}_{t\ge0}\) a
geometric diagnostic of the SBP. It encodes, along the relaxation path, the
interaction between the reference dynamics \(Q\), the endpoint laws
\(\mu,\nu\), and the current Schrödinger potential. Thus the relevant
conditioning is not only spectral at a fixed time; it is governed by the
evolution of the bridge-induced mobility itself. The present paper
establishes the operator-theoretic and dissipative structure of this
time-varying Onsager geometry. A quantitative characterization of when the
induced geometry facilitates or obstructs transport between \(\mu\) and
\(\nu\) is left for future work. In this sense, the exponential rate obtained
below should be viewed as a coarse but rigorous measure of the conditioning
of the induced Onsager geometry along the potential flow trajectory.

The compactness of semi-dual sublevel sets on \(\mathcal H\) gives uniform
control of this geometry along the flow. For every \(g_0\in\mathcal H\), the
trajectory remains in
\begin{align*}
\mathcal C_{g_0}
=
\{g\in\mathcal H:\mathcal J(g)\le \mathcal J(g_0)\}.
\end{align*}
On the compact set $\mathcal C_{g_0}$, the forms \(\mathcal E_g\) satisfy a uniform Poincaré
inequality on \(\nu\)-mean-zero functions. Combined with an
entropy--variance comparison for \(\log(m_g/\nu)\), this yields
\begin{align*}
\KL(m_t\|\nu)
\le
\KL(m_0\|\nu)
\exp(-\omega_{g_0}t),
\end{align*}
for some \(\omega_{g_0}>0\) depending only on the initial sublevel set.

Finally, the terminal potential reconstructs the full dynamic bridge through
a space-time Doob transform
\cite{doob1957conditional,jamison1975markov,leonard2014survey}. For each
\(g\), define the space-time harmonic function
\begin{equation}
\label{eq:intro-doob-h}
\varphi_s^g
=
e^{(1-s)Q}e^g,
\qquad s\in[0,1].
\end{equation}
The corresponding Doob-transformed generator is given, for \(x\neq y\), by
\begin{equation}
\label{eq:intro-doob-generator}
Q_s^g(x,y)
=
Q(x,y)
\frac{\varphi_s^g(y)}{\varphi_s^g(x)},
\end{equation}
with diagonal entries chosen so that rows sum to zero. The function
\(\varphi_s^g\) in \eqref{eq:intro-doob-h} defines the space-time harmonic
tilt, and \eqref{eq:intro-doob-generator} is the associated
time-inhomogeneous Markov generator. At the limiting potential \(g^\star\),
this Doob transform realizes the SB path measure. Thus convergence of the
terminal potential implies convergence of the endpoint coupling and of the
reconstructed path measure.

\paragraph{Contributions.}
The main contributions are:
\begin{enumerate}
    \item We introduce a gauge-fixed logarithmic flow on terminal
    Schrödinger potentials and prove its global well-posedness, dissipation
    of the semi-dual functional, and convergence to the unique gauge-fixed
    Schrödinger potential. The induced terminal marginal dynamics is the
    Schrödinger Bridge Onsager Flow (SBOF).

    \item We identify the intrinsic Onsager operator
    \(\mathsf K_g=Dm_g=\nabla^2\mathcal J(g)\), derive its covariance
    representation, and show that it induces a bridge-dependent nonlocal
    Dirichlet form with conductances \(A_g(y,z)\).

    \item We show that the SBOF has the Onsager form
    \[
    \dot m_t
    =
    -\mathsf K_{g_t}
    \frac{\delta}{\delta m}\KL(m_t\|\nu),
    \]
    and derive the marginal entropy-dissipation identity
    \[
    \frac{d}{dt}\KL(m_t\|\nu)
    =
    -\mathcal E_{g_t}\!\left(\log\frac{m_t}{\nu}\right).
    \]
    This identifies the Dirichlet form as the nonlocal Fisher information of
    the induced marginal flow.

    \item We establish a uniform Poincaré inequality on compact semi-dual
    sublevel sets and prove exponential convergence of the terminal marginal
    in relative entropy.

    \item We show that the limiting potential reconstructs the dynamic SB
    through a Doob transform of the reference CTMC.
\end{enumerate}
The paper is organized as follows. Section~\ref{sec:finite-sb} recalls the
finite-state SBP, its dynamic-to-static reduction, and the semi-dual
formulation. Section~\ref{sec:sbof} introduces the gauge-fixed logarithmic
potential flow on terminal Schrödinger potentials and proves global
well-posedness and convergence. Section~\ref{sec:onsager-geometry} derives
the induced SBOF on terminal marginals and
identifies the Onsager operator \(\mathsf K_g\), the associated nonlocal
Dirichlet form, and the marginal entropy-dissipation identity.
Section~\ref{sec:exponential} proves the uniform Poincaré inequality and
exponential relaxation. Finally, Section~\ref{sec:dynamic-reconstruction}
discusses the Doob-transform reconstruction and presents numerical
illustrations.

\section{Finite-State Schrödinger Bridges and Semi-Duality}
\label{sec:finite-sb}

We recall the finite-state SB framework used throughout the paper. Starting
from a CTMC reference on a finite state space, the dynamic entropy-minimization
problem on path space reduces exactly to a static EOT problem over endpoint
couplings \cite{leonard2014survey,chen2016relation}. This finite-dimensional
reduction is the starting point for the semi-dual formulation below. The
terminal-potential parametrization defines a nonlinear marginal map
\(g\mapsto m_g\), whose differential
\(\mathsf K_g=Dm_g=\nabla^2\mathcal J(g)\) will provide the Onsager mobility
studied in the rest of the paper.

The finite-dimensional static problem is classically related to IPF, Sinkhorn
scaling, matrix scaling, and alternating information projections
\cite{sinkhorn1967,ruschendorf1995,csiszar1975divergence,
csiszar1984information,cuturi2013sinkhorn,peyre2019computational}. We use
this background to introduce the endpoint parametrization and the associated
semi-dual functional.

\subsection{Static reduction and endpoint parametrization}
\label{subsec:static-parametrization}

Let \(N\) be a positive integer, let \(S=\{1,\ldots,N\}\) be a finite state
space, and let \(Q\) be the generator of a reference CTMC on \(S\). Let \(K_t=e^{tQ}\), \(t\in[0,1]\), denote the associated
Markov semigroup, and set \(K=K_1=e^Q\). Throughout the paper we assume
\begin{equation}
\label{eq:positivity}
K(x,y)>0,\qquad \mu(x)>0,\qquad \nu(y)>0,
\qquad \forall x,y\in S .
\end{equation}
Let \(\Omega=D([0,1],S)\) be the Skorokhod space of càdlàg paths with values
in \(S\), and let \(\mathbf R_\mu\) denote the law of the reference process
initialized with \(X_0\sim\mu\). Its endpoint law is
\begin{align*}
\mathbf R_\mu(X_0=x,X_1=y)=\mu(x)K(x,y),
\qquad \forall x,y\in S.
\end{align*}

The dynamic SBP is
\begin{equation}
\label{eq:dynamic-sb}
\inf_{\mathbf P\in\mathcal A(\mu,\nu)}
\KL(\mathbf P\,\|\,\mathbf R_\mu),
\end{equation}
where
\begin{align*}
\mathcal A(\mu,\nu)
=
\left\{
\mathbf P\in\mathcal P(\Omega):
\mathbf P_0=\mu,\;
\mathbf P_1=\nu,\;
\mathbf P\ll \mathbf R_\mu
\right\}.
\end{align*}
Here \(\mathbf P_t\) denotes the time-\(t\) marginal law of \(\mathbf P\),
and \(\mathbf P\ll \mathbf R_\mu\) denotes absolute continuity with respect
to the reference path measure. Under the positivity assumption \eqref{eq:positivity}, the admissible set is
nonempty and the dynamic SBP \eqref{eq:dynamic-sb} admits a unique minimizer
\(\mathbf P^\star\), called the Schrödinger bridge between \(\mu\) and
\(\nu\) relative to the reference path measure \(\mathbf R_\mu\). 

By the chain rule for relative entropy, a standard disintegration argument
reduces \eqref{eq:dynamic-sb} to the static problem
\begin{equation}
\label{eq:static-sb}
\inf_{\pi\in\Pi(\mu,\nu)}
\KL(\pi\,\|\,\mu K),
\end{equation}
where \((\mu K)(x,y)=\mu(x)K(x,y)\), and
\begin{align*}
\Pi(\mu,\nu)
=
\left\{
\pi\in\mathcal P(S\times S):
\pi_X=\mu,\;
\pi_Y=\nu
\right\}.
\end{align*}
The set \(\Pi(\mu,\nu)\) is the transportation polytope, or coupling set,
between \(\mu\) and \(\nu\).
Here \(\pi_X\) and \(\pi_Y\) denote the first and second marginals of
\(\pi\), namely
\begin{align*}
\pi_X(x)=\sum_{y\in S}\pi(x,y),
\qquad
\pi_Y(y)=\sum_{x\in S}\pi(x,y).
\end{align*}
The static formulation \eqref{eq:static-sb} is the finite-dimensional
EOT problem associated with the reference
kernel \(K\); see, for example,
\cite{leonard2014survey,chen2016relation,peyre2019computational}. The optimal path measure is recovered from the optimal endpoint coupling by
disintegration with respect to the reference Markov bridges. More precisely,
if \(\pi^\star\) solves \eqref{eq:static-sb}, then
\begin{equation}
\label{eq:path-reconstruction}
\mathbf P^\star(d\omega)
=
\sum_{x,y\in S}
\pi^\star(x,y)\,
\mathbf R_\mu(d\omega\mid X_0=x,X_1=y).
\end{equation}
The reconstruction formula \eqref{eq:path-reconstruction} shows that the
dynamic bridge is obtained by combining the optimal endpoint coupling with
the reference conditional bridges. Equivalently, the optimal coupling has the
classical Schrödinger scaling form
\begin{equation}
\label{eq:schrodinger-scaling}
\pi^\star(x,y)
=
f^\star(x)K(x,y)g^\star(y),
\end{equation}
where \(f^\star,g^\star>0\) solve
\begin{equation}
\label{eq:schrodinger-system}
f^\star(x)\sum_{y\in S}K(x,y)g^\star(y)=\mu(x),
\qquad
g^\star(y)\sum_{x\in S}f^\star(x)K(x,y)=\nu(y).
\end{equation}
The system \eqref{eq:schrodinger-system} enforces the prescribed marginals
of the scaled coupling \eqref{eq:schrodinger-scaling}. The classical
IPF/Sinkhorn iterations alternately impose these two constraints:
\begin{equation}
\label{eq:sinkhorn-iterations}
f^{k+1}(x)
=
\frac{\mu(x)}
{\sum_{y\in S}K(x,y)g^k(y)},
\qquad
g^{k+1}(y)
=
\frac{\nu(y)}
{\sum_{x\in S}f^{k+1}(x)K(x,y)}.
\end{equation}
Thus \eqref{eq:sinkhorn-iterations} is the alternating scaling procedure
associated with the Schrödinger system \eqref{eq:schrodinger-system}.
Under the positivity assumption \eqref{eq:positivity},  iterations (\ref{eq:sinkhorn-iterations}) 
converge to the scaling factors, up to the usual multiplicative gauge
\cite{sinkhorn1967,ruschendorf1995,csiszar1975divergence,
csiszar1984information,peyre2019computational}.

In the present paper, we use an equivalent one-potential parametrization in
which the first marginal is fixed by construction. This is the standard
semi-dual viewpoint in EOT, and it is also
used in potential-based formulations of SB solvers
\cite{peyre2019computational,nutz2021lectures,leger2021gradient,
korotin2023light}.

For \(g:S\to\mathbb R\), set
\begin{equation}
\label{eq:Zg-pg}
Z_g(x)=\sum_{z\in S}K(x,z)e^{g(z)},
\qquad
p_g(y|x)=\frac{K(x,y)e^{g(y)}}{Z_g(x)}.
\end{equation}
Using \eqref{eq:Zg-pg}, define the associated endpoint coupling by
\begin{equation}\label{J}
\pi_g(x,y)
=
\mu(x)p_g(y|x)
=
\mu(x)\frac{K(x,y)e^{g(y)}}{Z_g(x)}.
\end{equation}
It satisfies \((\pi_g)_X=\mu\). Its terminal marginal is
\begin{equation}
\label{eq:mg}
m_g(y)
=
\sum_{x\in S}\pi_g(x,y)
=
\sum_{x\in S}
\mu(x)\frac{K(x,y)e^{g(y)}}{Z_g(x)}.
\end{equation} 
Thus, by \eqref{eq:mg}, the SB condition is equivalent to the nonlinear
marginal equation
\[
m_g=\nu .
\]
The parametrization is invariant under additive constants:
\[
\pi_{g+c\mathbf 1}=\pi_g,
\qquad
m_{g+c\mathbf 1}=m_g,
\qquad c\in\mathbb R.
\]
This is why we work on the gauge-fixed subspace
\begin{equation}
\label{eq:gauge-space}
\mathcal H
=
\left\{
g\in\mathbb R^S:
\mathbb E_\nu[g]=0
\right\}.
\end{equation}
The subspace \(\mathcal H\) in \eqref{eq:gauge-space} fixes the additive
freedom of the terminal potential.

\subsection{Semi-dual formulation}
\label{subsec:semi-dual-formulation}

We now record the semi-dual formulation associated with the
one-potential parametrization. For \(g:S\to\mathbb R\), define
\begin{align}
\mathcal J(g)
=& \sum_{x\in S}\mu(x)
\log\!\left(
\sum_{y\in S}K(x,y)e^{g(y)}
\right)
-
\sum_{y\in S}\nu(y)g(y),\notag\\
=&\sum_{x\in S}\mu(x)\log Z_g(x)
-
\mathbb E_\nu[g].\label{eq:J}
\end{align}
The functional \(\mathcal J\) is smooth, convex, and invariant under
additive constants. It is the negative of the usual concave semi-dual
objective for the EOT problem
\eqref{eq:static-sb}.

\begin{theorem}
\label{thm:finite-duality}
Assume that \eqref{eq:positivity} holds. Then
\begin{equation}
\label{eq:dual-relation}
\inf_{\pi\in\Pi(\mu,\nu)}
\KL(\pi\,\|\,\mu K)
=
-\inf_{g\in\mathbb R^S}\mathcal J(g).
\end{equation}
Moreover, if \(g^\star\) minimizes \(\mathcal J\), then the coupling
\(\pi_{g^\star}\) defined in \eqref{eq:Zg-pg} solves the static
SBP \eqref{eq:static-sb}.
\end{theorem}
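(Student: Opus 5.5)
The plan is to prove weak duality by a direct Gibbs/Donsker--Varadhan inequality, and then to close the gap by showing that $\mathcal J$ has a minimizer. At that minimizer the first-order condition will force $\pi_{g^\star}$ to be admissible and to attain the bound.

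\emph{Weak duality.} Fix $\pi\in\Pi(\mu,\nu)$ with $\KL(\pi\|\mu K)<\infty$ and any $g\in\mathbb R^S$. Write $\pi(x,y)=\mu(x)\pi(y|x)$. Since $\pi_X=\mu$, the chain rule for relative entropy gives
\[
\KL(\pi\|\mu K)=\sum_x\mu(x)\KL\bigl(\pi(\cdot|x)\,\|\,K(x,\cdot)\bigr).
\]
The Gibbs variational inequality $\KL(q\|K(x,\cdot))\ge \sum_y q(y)g(y)-\log Z_g(x)$ holds for each $x$. Equality holds iff $q=p_g(\cdot|x)$; this follows from $\KL(q\|p_g(\cdot|x))\ge0$ after expanding $\log p_g$. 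Summing against $\mu$ and using $\pi_Y=\nu$ gives
\[
\KL(\pi\|\mu K)\ge \mathbb E_\nu[g]-\sum_x\mu(x)\log Z_g(x)=-\mathcal J(g).
\]
Therefore $\inf_\pi\KL\ge-\inf_g\mathcal J$.

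\emph{Existence of a minimizer.} By gauge invariance it suffices to minimize $\mathcal J$ over $\mathcal H$, and I expect this coercivity step to be the main obstacle. Set $\kappa=\min K>0$ and $\bar\mu=\min\mu>0$. Then $\log Z_g(x)\ge \log\kappa+\max g$, so
\[
\mathcal J(g)\ge \log\kappa+\max g-\mathbb E_\nu[g]=\log\kappa+\max g
\]
for $g\in\mathcal H$. Any nonzero $g\in\mathcal H$ has positive and negative parts, because $\nu>0$. Since $\mathbb E_\nu[g]=0$ with $\nu\ge\min\nu$, we get $\max g\ge \min\nu\,\cdot\|g\|_\infty/(1-\min\nu)$ or a similar linear lower bound. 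Concretely, $\sum\nu(y)g(y)=0$ gives $\max g\ge \nu_{\min}\,|\min g|$. Hence $\max g\ge c\|g\|_\infty$ with $c=\nu_{\min}>0$. So $\mathcal J$ is coercive and continuous on the finite-dimensional space $\mathcal H$, and a minimizer $g^\star$ exists. It is also a global minimizer on $\mathbb R^S$, by invariance under constants.

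\emph{Attainment.} At $g^\star$ we have $\nabla\mathcal J(g^\star)=0$. The identity \eqref{eq:intro-gradient}, $\nabla\mathcal J(g)=m_g-\nu$, is a direct differentiation of $\log Z_g$. So $m_{g^\star}=\nu$, and since $(\pi_{g^\star})_X=\mu$ by construction, $\pi_{g^\star}\in\Pi(\mu,\nu)$. Its conditionals are exactly $p_{g^\star}(\cdot|x)$, so the Gibbs inequality is an equality for every $x$. This gives $\KL(\pi_{g^\star}\|\mu K)=-\mathcal J(g^\star)$. Combined with weak duality, this yields \eqref{eq:dual-relation} and shows that $\pi_{g^\star}$ is optimal.

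The same argument applies to any minimizer $g^\star$, not only the one constructed, because any minimizer is a critical point. This proves the final assertion.
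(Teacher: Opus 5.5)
Your proof is correct, and its skeleton matches the paper's sketch: a minimizer of $\mathcal J$ exists on $\mathcal H$ by coercivity, and the first-order condition gives $m_{g^\star}=\nu$. Two steps, however, are carried out differently.

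For the value identity, the paper appeals to general finite-dimensional entropy (convex) duality, with details deferred to the supplement. It organizes the argument around convexity of $\mathcal J$, via the covariance formula for $\nabla^2\mathcal J(g)$. You instead use the Gibbs variational inequality as an explicit weak-duality certificate, and you close the gap through its equality case at $p_{g^\star}(\cdot|x)$. This route is self-contained and needs neither the Hessian nor any constraint qualification. It also shows that every critical point of $\mathcal J$ is a global minimizer whose coupling $\pi_g$ is optimal, so convexity never has to be invoked.

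For coercivity, the paper (Appendix~\ref{app:coercivity}) argues along normalized directions $g_n=r_nu_n$ with $u_n\to u$, and shows $\mathcal J(g_n)/r_n\to\max_y u(y)>0$; this is purely qualitative. Your bound $\mathcal J(g)\ge\log\min K+\nu_{\min}\|g\|_\infty$ on $\mathcal H$ is explicit. It gives the concrete radius $\|g\|_\infty\le(\mathcal J(g_0)-\log\min K)/\nu_{\min}$ for the sublevel set $\mathcal C_{g_0}$, which could be used to make the constants $a_{g_0}$, $b_{g_0}$, $\lambda_{g_0}$ of Section~\ref{sec:exponential} quantitative.

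In return, the paper's Hessian route yields strict convexity modulo constants. That gives uniqueness of the gauge-fixed potential (Theorem~\ref{thm:exist-unique}) and sets up the Onsager operator used later. Uniqueness is not part of this statement, so your proof loses nothing here.

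A cosmetic point: you can drop the hedged alternative bound with the factor $1/(1-\nu_{\min})$. The clean inequality $\max g\ge\nu_{\min}|\min g|$ already gives $\max g\ge\nu_{\min}\|g\|_\infty$.
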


The proof follows from finite-dimensional entropy duality; details are given
in the supplementary material. The
sign convention in \eqref{eq:J} is chosen so that the semi-dual problem is a
minimization problem. Relation \eqref{eq:dual-relation} reduces the static SBP
to minimizing the smooth convex functional \(\mathcal J\) over \(\mathbb R^S\).
Because \(\mathcal J\) is invariant under additive constants, its minimizers
are characterized by the first-order optimality condition on the quotient by
constants. The next proposition records the gradient of \(\mathcal J\); its
vanishing is precisely the Schr\"odinger marginal equation.

\begin{proposition}
\label{prop:gradient-J}
For every \(g\in\mathbb R^S\),
\begin{equation}
\label{eq:grad-J}
\nabla \mathcal J(g)=m_g-\nu.
\end{equation}
\end{proposition}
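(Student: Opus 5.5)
The plan is to compute the partial derivative of \(\mathcal J\) with respect to a single coordinate \(g(y)\), \(y\in S\), directly from the representation \eqref{eq:J}. The functional splits into the log-partition term \(\sum_{x}\mu(x)\log Z_g(x)\) and the linear term \(-\mathbb E_\nu[g]=-\sum_y\nu(y)g(y)\). I will differentiate each piece separately and then compare the result with the formula \eqref{eq:mg} for \(m_g\).

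For the linear term the derivative with respect to \(g(y)\) is \(-\nu(y)\).

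For the log-partition term, \(Z_g(x)=\sum_z K(x,z)e^{g(z)}\) is a finite sum of smooth functions. Moreover \(Z_g(x)>0\) for every \(g\), because \(K(x,z)>0\) by \eqref{eq:positivity}; nonnegativity of \(K\) together with stochasticity would already suffice. Hence \(\log Z_g(x)\) is smooth, and
\begin{equation*}
\frac{\partial}{\partial g(y)}\log Z_g(x)=\frac{K(x,y)e^{g(y)}}{Z_g(x)}=p_g(y|x),
\end{equation*}
using \eqref{eq:Zg-pg}. Multiplying by \(\mu(x)\) and summing over \(x\) gives
\begin{equation*}
\sum_x\mu(x)\,p_g(y|x)=m_g(y),
\end{equation*}
by \eqref{eq:mg}. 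Combining the two pieces yields \(\partial_{g(y)}\mathcal J(g)=m_g(y)-\nu(y)\) for every \(y\), which is \eqref{eq:grad-J}. Here the gradient is taken with respect to the standard Euclidean inner product on \(\mathbb R^S\), consistent with the pairing \(\sum_y\nu(y)g(y)\) used in \eqref{eq:J}.

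There is no real obstacle in this argument. The only points needing care are the well-definedness of the logarithm, guaranteed by positivity of \(Z_g\), and the convention for the gradient. As a consistency check, I will also note that \(\sum_y(m_g(y)-\nu(y))=1-1=0\). This matches the invariance \(\mathcal J(g+c\mathbf 1)=\mathcal J(g)\), which holds because \(\log Z_{g+c\mathbf 1}=\log Z_g+c\) and \(\mathbb E_\nu[g+c\mathbf 1]=\mathbb E_\nu[g]+c\). Consequently the gradient is orthogonal to \(\mathbf 1\), as it must be.
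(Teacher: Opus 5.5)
Your proposal is correct and follows essentially the same route as the paper: you differentiate \(\mathcal J\) coordinatewise, identify \(\partial_{g(y)}\log Z_g(x)\) with \(p_g(y|x)\), and sum against \(\mu\) to obtain \(m_g(y)-\nu(y)\). Your remarks on the positivity of \(Z_g\) and on the gradient being orthogonal to \(\mathbf 1\) are accurate, though the identity itself does not need them.
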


\begin{proof}
For \(y\in S\),
\begin{align*}
\frac{\partial \mathcal J}{\partial g(y)}(g)
=
\sum_{x\in S}
\mu(x)
\frac{K(x,y)e^{g(y)}}{\sum_{z\in S}K(x,z)e^{g(z)}}
-
\nu(y).
\end{align*}
By \eqref{eq:Zg-pg} and \eqref{eq:mg}, the right-hand side is exactly
\(m_g(y)-\nu(y)\).
\end{proof}

Thus the first-order optimality condition for \(\mathcal J\) is precisely
the Schr\"odinger marginal equation \(m_g=\nu\).

\subsection{Gauge fixing and static well-posedness}
\label{subsec:gauge-static-theory}

Since $\mathcal J$ is invariant under $g\mapsto g+c\mathbf 1$, uniqueness
holds only modulo additive constants. We fix the gauge by working on the
hyperplane $\mathcal H$ defined in \eqref{eq:gauge-space}.

This choice is compatible with the logarithmic flow introduced in the next
section.

\begin{theorem}
\label{thm:exist-unique}
Assume that \eqref{eq:positivity} holds. Then the semi-dual functional
$\mathcal J$ is convex on $\mathbb R^S$, invariant under additive
constants, and coercive on the gauge-fixed space $\mathcal H$. Moreover,
$\mathcal J$ is strictly convex on $\mathcal H$. Consequently,
$\mathcal J$ admits a unique minimizer $g^\star\in\mathcal H$, and this
minimizer satisfies
\[
m_{g^\star}=\nu.
\]
Thus $g^\star$ is the unique gauge-fixed Schr\"odinger potential, and
$\pi_{g^\star}$ is the unique solution of the static problem
\eqref{eq:static-sb}.
\end{theorem}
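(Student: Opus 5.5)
We prove the theorem in five steps: convexity and gauge invariance from a direct computation, strict convexity on \(\mathcal H\) from the Hessian, coercivity from a lower bound by \(\max g\), then existence, uniqueness, and the identification with the static problem.

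\emph{Step 1: invariance and convexity.} Invariance is immediate. Replacing \(g\) by \(g+c\mathbf 1\) multiplies \(Z_g(x)\) by \(e^c\) and adds \(c\) to \(\mathbb E_\nu[g]\), so the two changes cancel in \eqref{eq:J}.

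For convexity, we differentiate \eqref{eq:grad-J} once more. Since \(\partial_{g(z)} p_g(y|x)=p_g(y|x)(\delta_{yz}-p_g(z|x))\), this gives the covariance representation
\begin{align*}
\langle f,\nabla^2\mathcal J(g)f\rangle=\sum_{x\in S}\mu(x)\operatorname{Var}_{p_g(\cdot|x)}(f).
\end{align*}
The right-hand side is nonnegative, so \(\mathcal J\) is convex on \(\mathbb R^S\). Alternatively, each term \(\log Z_g(x)\) is a log-sum-exp of affine functions of \(g\), hence convex.

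\emph{Step 2: strict convexity on \(\mathcal H\).} By \eqref{eq:positivity}, each \(p_g(\cdot|x)\) has full support. Hence \(\operatorname{Var}_{p_g(\cdot|x)}(f)=0\) forces \(f\) to be constant. A constant \(f=c\mathbf 1\) lies in \(\mathcal H\) only if \(c=\mathbb E_\nu[f]=0\), because \(\mathbb E_\nu[\mathbf 1]=1\). The Hessian is therefore positive definite on \(\mathcal H\) at every \(g\), and \(\mathcal J|_{\mathcal H}\) is strictly convex.

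\emph{Step 3: coercivity on \(\mathcal H\) (the main obstacle).} This is the step needing a quantitative estimate. Set \(\kappa=\min_{x,y}K(x,y)>0\), and for \(g\in\mathcal H\) let \(M=\max g\) and \(m=\min g\). Then \(Z_g(x)\ge \kappa e^{M}\), which gives
\begin{align*}
\mathcal J(g)\ge \log\kappa + M - \mathbb E_\nu[g] = \log\kappa + M .
\end{align*}
It remains to show that \(M\to\infty\) whenever \(\|g\|\to\infty\) in \(\mathcal H\). Since \(\mathbb E_\nu[g]=0\) and \(\nu_{\min}=\min_y\nu(y)>0\), we have
\begin{align*}
0=\sum_y\nu(y)g(y)\le \nu_{\min}\, m+(1-\nu_{\min})M .
\end{align*}
This yields \(-m\le \frac{1-\nu_{\min}}{\nu_{\min}}M\). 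Also \(M\ge 0\), because a mean-zero \(g\) has a nonnegative maximum. It follows that \(\|g\|_\infty=\max(M,-m)\le M/\nu_{\min}\), and therefore
\begin{align*}
\mathcal J(g)\ge \log\kappa+\nu_{\min}\|g\|_\infty ,
\end{align*}
which is coercive.

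\emph{Step 4: existence and uniqueness.} Since \(\mathcal J\) is continuous and coercive on the finite-dimensional space \(\mathcal H\), its sublevel sets in \(\mathcal H\) are compact, so a minimizer \(g^\star\in\mathcal H\) exists. It is unique by strict convexity on \(\mathcal H\).

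By gauge invariance, \(g^\star\) also minimizes \(\mathcal J\) over all of \(\mathbb R^S\). Hence \(\nabla\mathcal J(g^\star)=0\), and Proposition~\ref{prop:gradient-J} gives \(m_{g^\star}=\nu\).

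\emph{Step 5: identification with the static problem.} By Theorem~\ref{thm:finite-duality}, \(\pi_{g^\star}\) solves \eqref{eq:static-sb}. Uniqueness of the static minimizer follows from strict convexity of \(\pi\mapsto\KL(\pi\|\mu K)\) on the convex set \(\Pi(\mu,\nu)\). Any other gauge-fixed potential \(g\) with \(m_g=\nu\) is a critical point of the convex function \(\mathcal J\), hence a minimizer, hence equal to \(g^\star\). This justifies calling \(g^\star\) the unique gauge-fixed Schrödinger potential.
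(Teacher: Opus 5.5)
Your proof is correct. Its overall architecture matches the paper's Appendix~\ref{app:static-theory}: the covariance Hessian gives convexity and strict convexity on $\mathcal H$, then coercivity, existence and uniqueness, the first-order condition, and finally Theorem~\ref{thm:finite-duality}. The genuine difference is the coercivity step.

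The paper argues asymptotically. It normalizes $g_n=r_nu_n$, extracts a convergent subsequence $u_n\to u$, and shows $\mathcal J(g_n)/r_n\to\max_y u(y)>0$. In other words, it identifies the recession slope of $\mathcal J$ along rays in $\mathcal H$ and checks that this slope is positive. You instead prove the explicit linear bound $\mathcal J(g)\ge\log\kappa+\nu_{\min}\|g\|_\infty$ on $\mathcal H$. It follows from $Z_g(x)\ge\kappa e^{\max g}$ together with the elementary estimate $\|g\|_\infty\le \max g/\nu_{\min}$ for $\nu$-mean-zero $g$.

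Your route avoids subsequence extraction and is quantitative. It gives $\mathcal C_{g_0}\subset\{g\in\mathcal H:\|g\|_\infty\le R\}$ with $R=(\mathcal J(g_0)-\log\kappa)/\nu_{\min}$. Since $K$ is stochastic, this yields $p_g(y|x)\ge\kappa e^{-2R}$ on $\mathcal C_{g_0}$, which could make the compactness constants $\lambda_{g_0}$ and $C_{g_0}$ of Section~\ref{sec:exponential} explicit. The paper's argument is the standard recession-function criterion; it is more conceptual but produces no constants.

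Two smaller differences also work in your favor.
\begin{enumerate}
\item You get $\nabla\mathcal J(g^\star)=0$ directly, by noting through gauge invariance that $g^\star$ minimizes $\mathcal J$ over all of $\mathbb R^S$. This is also exactly the hypothesis of Theorem~\ref{thm:finite-duality}. The paper instead combines constrained optimality on $\mathcal H$ with $\langle m_{g^\star}-\nu,\mathbf 1\rangle=0$ and the decomposition $\mathbb R^S=\mathcal H\oplus\operatorname{span}\{\mathbf 1\}$.
\item You observe that any $g\in\mathcal H$ with $m_g=\nu$ is a critical point of a convex function, hence a minimizer, hence equal to $g^\star$. This makes explicit the uniqueness of the gauge-fixed Schr\"odinger potential, which the paper leaves implicit but later relies on in Appendix~\ref{app:potential-convergence}.
\end{enumerate}
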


The proof is given in Appendix~\ref{app:static-theory}. We briefly indicate
the main points. The Hessian of $\mathcal J$ is the differential of the
marginal map $g\mapsto m_g$:
\begin{equation}
\label{eq:hessian-static-preview}
\nabla^2\mathcal J(g)=Dm_g.
\end{equation}
More explicitly, for every $f:S\to\mathbb R$,
\begin{equation}
\label{eq:hessian-cov-preview}
\left\langle f,\nabla^2\mathcal J(g)f\right\rangle
=
\sum_{x\in S}
\mu(x)\operatorname{Var}_{p_g(\cdot|x)}(f).
\end{equation}
Hence $\mathcal J$ is convex. Since $K(x,y)>0$, each
$p_g(\cdot|x)$ has full support, and the right-hand side of
\eqref{eq:hessian-cov-preview} vanishes if and only if $f$ is constant.
Thus the Hessian is strictly positive on the gauge-fixed
hyperplane $\mathcal H$.

Furthermore, $\mathcal J$ is coercive on $\mathcal H$. Therefore it
attains a unique minimizer $g^\star\in\mathcal H$. By the gradient identity
\eqref{eq:grad-J}, the first-order optimality condition gives
$m_{g^\star}=\nu$.

The covariance representation \eqref{eq:hessian-cov-preview} will be used
again in Section~\ref{sec:onsager-geometry}, where the Hessian is identified
with the nonlocal Onsager operator governing the induced marginal dynamics.

\section{Potential Dynamics and Convergence to the Schr\"odinger Potential}
\label{sec:sbof}

We now turn the static marginal equation $m_g=\nu$ into a continuous-time
dynamics on the terminal potential. This perspective belongs to a recent
variational literature on Sinkhorn and IPF-type methods, in which entropic
scaling algorithms are interpreted as gradient, Bregman-gradient, or
mirror-flow dynamics on dual variables
\cite{leger2021gradient,mishchenko2019sinkhorn,aubin2022mirror,
karimi2024sinkhornflow,srinivasan2025designing}. Logarithmic flows on the 
dual potential, with or without gauge fixing, have been studied in this context
\cite{leger2021gradient,karimi2024sinkhornflow}. Related continuous-time 
perspectives also appear in recent SB flow formulations 
\cite{debortoli2024sbf}.

The purpose of this section is different. We introduce a gauge-fixed 
logarithmic potential flow whose \emph{induced marginal dynamics} reveals
a nonlocal Onsager geometry. The operator
$\mathsf K_{g_t}=Dm_{g_t}=\nabla^2\mathcal J(g_t)$, which governs the
marginal velocity $\dot m_t$, is not prescribed by an external metric or
mirror map. It is generated by the current Schr\"odinger potential itself,
and its associated Dirichlet form encodes the bridge-dependent
conductances $A_{g_t}(y,z)$. This induced geometry is the central object
of the paper; the potential flow is merely the lens through which it is
revealed.
\subsection{Definition and equilibria}
\label{subsec:flow-definition}

Let \(g_0\in\mathcal H\). We define the gauge-fixed logarithmic potential flow by
\begin{equation}
\label{eq:SBOF}
\dot g_t
=
-\log\frac{m_{g_t}}{\nu}
+
\mathbb E_\nu\!\left[
\log\frac{m_{g_t}}{\nu}
\right]\mathbf 1,
\end{equation}
with initial condition $g_{t=0}=g_0.$ The logarithm is well defined for all \(g\in\mathbb R^S\), since the
positivity assumption \eqref{eq:positivity} implies \(m_g(y)>0\) for every
\(y\in S\). The second term in \eqref{eq:SBOF} fixes the additive gauge. In
fact,
\begin{align*}
\frac{d}{dt}\mathbb E_\nu[g_t]
=
-\mathbb E_\nu\!\left[
\log\frac{m_{g_t}}{\nu}
\right]
+
\mathbb E_\nu\!\left[
\log\frac{m_{g_t}}{\nu}
\right]
=0.
\end{align*}
Hence, if \(g_0\in\mathcal H\), then \(g_t\in\mathcal H\) for as long as the
solution exists.

The equilibria of \eqref{eq:SBOF} are precisely the gauge-fixed
Schrödinger potentials. Indeed, if \(g\in\mathcal H\) is an equilibrium, then
\begin{align*}
\log\frac{m_g}{\nu}
=
\mathbb E_\nu\!\left[
\log\frac{m_g}{\nu}
\right]\mathbf 1,
\end{align*}
so \(m_g/\nu\) is constant on \(S\). Since both \(m_g\) and \(\nu\) are
probability measures, this constant must be one, and therefore \(m_g=\nu\).
Conversely, if \(m_g=\nu\), then the right-hand side of \eqref{eq:SBOF}
vanishes. By Theorem~\ref{thm:exist-unique}, the unique equilibrium in
\(\mathcal H\) is the Schrödinger potential \(g^\star\).

\subsection{Lyapunov identity and global convergence}
\label{subsec:flow-convergence}

We next record the basic well-posedness and convergence properties of the
potential flow. The key point is that the semi-dual functional
\(\mathcal J\) is a Lyapunov function whose dissipation is exactly the
symmetrized relative entropy between the current terminal marginal and the
target.

\begin{theorem}
\label{thm:SBOF-convergence}
Assume that \eqref{eq:positivity} holds. For every \(g_0\in\mathcal H\),
the potential flow \eqref{eq:SBOF} admits a unique global solution
\(g_t\in\mathcal H\), \(t\ge0\). Moreover,
\begin{equation}
\label{eq:J-dissipation}
\frac{d}{dt}\mathcal J(g_t)
=
-\KL(m_{g_t}\,\|\,\nu)
-\KL(\nu\,\|\,m_{g_t})
\le 0.
\end{equation}
There exists a unique \(g^\star\in\mathcal H\) such that
\(m_{g^\star}=\nu\). Moreover, \(g_t\to g^\star\) and
\(m_{g_t}\to\nu\) as \(t\to\infty\).
\end{theorem}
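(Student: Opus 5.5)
The plan is to combine local well-posedness from smoothness of the vector field, an exact dissipation identity for \(\mathcal J\), and compactness of sublevel sets on \(\mathcal H\) from Theorem~\ref{thm:exist-unique}. Convergence then follows by a LaSalle-type argument.

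\textbf{Local solutions.} Write the right-hand side of \eqref{eq:SBOF} as \(F(g)=-L_g+\mathbb E_\nu[L_g]\mathbf 1\), where \(L_g=\log(m_g/\nu)\). By \eqref{eq:positivity}, each \(m_g(y)\) is a smooth, strictly positive function of \(g\). Hence \(F\) is \(C^\infty\) on \(\mathbb R^S\), and it maps into the tangent space \(\mathcal H\) because \(\mathbb E_\nu[F(g)]=0\). Picard--Lindel\"of then gives a unique maximal solution \(g_t\in\mathcal H\) on some interval \([0,T^\ast)\). Gauge preservation was already checked before the statement.

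\textbf{Dissipation identity.} Use Proposition~\ref{prop:gradient-J} and the chain rule:
\[
\frac{d}{dt}\mathcal J(g_t)=\langle m_{g_t}-\nu,\,-L_{g_t}+c_t\mathbf 1\rangle ,\qquad c_t=\mathbb E_\nu[L_{g_t}].
\]
The constant term drops out because \(\langle m_g-\nu,\mathbf 1\rangle=0\), as both are probability vectors. What remains is \(-\sum_y (m(y)-\nu(y))\log\frac{m(y)}{\nu(y)}\), which equals \(-\KL(m\|\nu)-\KL(\nu\|m)\). This proves \eqref{eq:J-dissipation}. Consequently \(g_t\) stays in the sublevel set \(\mathcal C_{g_0}=\{g\in\mathcal H:\mathcal J(g)\le\mathcal J(g_0)\}\). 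By convexity, continuity and coercivity of \(\mathcal J\) on \(\mathcal H\) (Theorem~\ref{thm:exist-unique}), this set is closed and bounded, hence compact. A solution of an ODE with locally Lipschitz field that stays in a compact set cannot blow up, so \(T^\ast=\infty\).

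\textbf{Convergence.} This is the step needing the most care, though it stays elementary. Define \(D(g)=\KL(m_g\|\nu)+\KL(\nu\|m_g)\), which is continuous and nonnegative. Since \(\mathcal J\) is bounded below on \(\mathcal H\) (it has a minimizer), integrating the dissipation identity gives \(\int_0^\infty D(g_t)\,dt\le \mathcal J(g_0)-\mathcal J(g^\star)<\infty\). The function \(t\mapsto D(g_t)\) is uniformly Lipschitz in \(t\): \(\dot g_t=F(g_t)\) is bounded on the compact set, and \(D\) is \(C^1\) there. By Barbalat's lemma, \(D(g_t)\to0\).

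Now let \(\bar g\) be any limit point of \((g_t)\); one exists by compactness. By continuity \(D(\bar g)=0\), so \(m_{\bar g}=\nu\). Moreover \(\bar g\in\mathcal H\), since \(\mathcal H\) is closed. The uniqueness part of Theorem~\ref{thm:exist-unique} then forces \(\bar g=g^\star\). Since every limit point equals \(g^\star\) and the trajectory is precompact, \(g_t\to g^\star\). Continuity of \(g\mapsto m_g\) gives \(m_{g_t}\to\nu\).

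An alternative to Barbalat is LaSalle's invariance principle with Lyapunov function \(\mathcal J\). The set where \(D=0\) in \(\mathcal H\) is the single point \(\{g^\star\}\), by the equilibrium analysis of Section~\ref{subsec:flow-definition}. Either route works; the only genuine input is the compactness of \(\mathcal C_{g_0}\), which comes from coercivity on \(\mathcal H\) and is already available.
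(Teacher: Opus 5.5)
Your proposal is correct and follows essentially the same route as the paper's proof: Cauchy--Lipschitz local well-posedness with gauge preservation, the dissipation identity from $\nabla\mathcal J(g)=m_g-\nu$ with the gauge term dropping out, global existence from compactness of $\mathcal C_{g_0}$, and a Barbalat-type argument giving $\KL(m_{g_t}\|\nu)+\KL(\nu\|m_{g_t})\to 0$. It then concludes, as the paper does, by showing that every accumulation point in $\mathcal H$ satisfies $m_{\bar g}=\nu$ and hence equals $g^\star$.
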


\begin{proof}
The full proof is given in Appendix~\ref{app:sbof-convergence}. We only
derive the dissipation identity here. Since
\(\nabla\mathcal J(g)=m_g-\nu\), along a smooth solution of \eqref{eq:SBOF} we have
\begin{align*}
\frac{d}{dt}\mathcal J(g_t)
=
\left\langle m_{g_t}-\nu,\dot g_t\right\rangle .
\end{align*}
The gauge correction in \eqref{eq:SBOF} does not contribute, because
\(\left\langle m_{g_t}-\nu,\mathbf 1\right\rangle=0\).
Therefore
\begin{align*}
\frac{d}{dt}\mathcal J(g_t)
&=
-\sum_{y\in S}
\bigl(m_{g_t}(y)-\nu(y)\bigr)
\log\frac{m_{g_t}(y)}{\nu(y)}\\
&=
-\KL(m_{g_t}\,\|\,\nu)-\KL(\nu\,\|\,m_{g_t}),
\end{align*}
which gives \eqref{eq:J-dissipation}.
\end{proof}

The identity \eqref{eq:J-dissipation} implies that every trajectory remains
in the sublevel set
\begin{align*}
\mathcal C_{g_0}
=
\left\{
g\in\mathcal H:\mathcal J(g)\le \mathcal J(g_0)
\right\}.
\end{align*}
Since \(\mathcal J\) is coercive on \(\mathcal H\), the  set $\mathcal C_{g_0}$ is compact.
This compactness prevents finite-time blow-up and is also used below to
obtain uniform functional inequalities for the marginal dynamics.

\subsection{Convergence of endpoint couplings and path measures}
\label{subsec:coupling-path-convergence}
The convergence $g_t\to g^\star$ established in
Theorem~\ref{thm:SBOF-convergence} carries over to the endpoint
couplings $\pi_{g_t}$ and the reconstructed path measures
$\mathbf P^{g_t}$. Recall that
for each \(g\in\mathbb R^S\),
\begin{align}\label{pig}
\pi_g(x,y)
=
\mu(x)
\frac{K(x,y)e^{g(y)}}{Z_g(x)}.
\end{align}
The corresponding path measure is obtained by lifting \(\pi_g\) through the
reference Markov bridges:
\begin{equation}
\label{eq:Pg-definition}
\mathbf P^g(d\omega)
=
\sum_{x,y\in S}
\pi_g(x,y)\,
\mathbf R_\mu(d\omega\mid X_0=x,X_1=y).
\end{equation}

\begin{corollary}
\label{cor:coupling-path-convergence}
Let $g_t$ solve the potential flow \eqref{eq:SBOF} with $g_0\in\mathcal H$,
and let $\pi_{g_t}$ and $\mathbf P^{g_t}$ be defined by \eqref{pig} and
\eqref{eq:Pg-definition}, respectively. Then
\begin{align}
\|\pi_{g_t}-\pi_{g^\star}\|_{\TV}\to0,
\qquad
\|\mathbf P^{g_t}-\mathbf P^{g^\star}\|_{\TV}\to0,
\qquad t\to\infty.
\end{align}
\end{corollary}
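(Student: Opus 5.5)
The plan is to reduce both convergences to continuity of the finite-dimensional map $g\mapsto\pi_g$, and then use the fact that lifting a coupling through fixed reference bridges preserves total variation distance.

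\emph{Step 1: couplings.} The space $S\times S$ is finite and $K(x,y)>0$, so $Z_g(x)=\sum_z K(x,z)e^{g(z)}>0$ for every $g$. Hence each entry
\[
\pi_g(x,y)=\mu(x)K(x,y)e^{g(y)}/Z_g(x)
\]
is a continuous (indeed smooth) function of $g\in\mathbb R^S$. Theorem~\ref{thm:SBOF-convergence} gives $g_t\to g^\star$, so $\pi_{g_t}(x,y)\to\pi_{g^\star}(x,y)$ for each of the $N^2$ pairs. Summing over pairs, we get
\[
\|\pi_{g_t}-\pi_{g^\star}\|_{\TV}=\tfrac12\sum_{x,y}|\pi_{g_t}(x,y)-\pi_{g^\star}(x,y)|\to0.
\]
For a quantitative statement, one can use that the trajectory stays in the compact set $\mathcal C_{g_0}$, on which $g\mapsto\pi_g$ is Lipschitz. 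This yields $\|\pi_{g_t}-\pi_{g^\star}\|_{\TV}\le L_{g_0}|g_t-g^\star|$, but we do not need it.

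\emph{Step 2: path measures.} Write $\mathbf R^{xy}=\mathbf R_\mu(\cdot\mid X_0=x,X_1=y)$. These conditional laws are well defined because $\mathbf R_\mu(X_0=x,X_1=y)=\mu(x)K(x,y)>0$ by \eqref{eq:positivity}. By \eqref{eq:Pg-definition}, for any measurable $A\subset\Omega$,
\[
\bigl|\mathbf P^{g_t}(A)-\mathbf P^{g^\star}(A)\bigr|=\Bigl|\sum_{x,y}\bigl(\pi_{g_t}-\pi_{g^\star}\bigr)(x,y)\,\mathbf R^{xy}(A)\Bigr|.
\]
This yields at once the bound $\le\sum_{x,y}|\pi_{g_t}-\pi_{g^\star}|(x,y)=2\|\pi_{g_t}-\pi_{g^\star}\|_{\TV}$. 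To get the sharp factor, note that $\sum_{x,y}(\pi_{g_t}-\pi_{g^\star})(x,y)=0$. We may therefore replace $\mathbf R^{xy}(A)\in[0,1]$ by $\mathbf R^{xy}(A)-\tfrac12$, which gives the bound $\|\pi_{g_t}-\pi_{g^\star}\|_{\TV}$. In fact equality holds: the endpoint map $\omega\mapsto(X_0,X_1)$ pushes $\mathbf P^g$ forward to $\pi_g$, so TV cannot decrease under lifting either. Either inequality suffices, and Step 1 finishes the proof.

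\emph{Main obstacle.} Nothing here is deep. The only point needing care is that the mixture \eqref{eq:Pg-definition} uses the \emph{same} bridge kernels $\mathbf R^{xy}$ for every $g$. All $g$-dependence therefore sits in the finitely many weights $\pi_g(x,y)$, and the positivity assumption guarantees these bridges are well defined for every pair $(x,y)$. Once this is checked, the TV contraction under mixing is a one-line estimate.
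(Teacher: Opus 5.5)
Your proposal is correct and follows essentially the paper's proof: continuity of $g\mapsto\pi_g$ together with $g_t\to g^\star$ from Theorem~\ref{thm:SBOF-convergence}, then the mixture formula \eqref{eq:Pg-definition} with fixed reference bridges and a triangle-inequality bound. Your extra observations are correct but not needed. The centering trick and the endpoint pushforward together show that $\|\mathbf P^{g_t}-\mathbf P^{g^\star}\|_{\TV}=\|\pi_{g_t}-\pi_{g^\star}\|_{\TV}$ exactly, whereas the paper settles for the cruder bound $\sum_{x,y}|\pi_{g_t}-\pi_{g^\star}|(x,y)$.
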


The proof is given in Appendix~\ref{app:path-convergence}. Since
\(g_t\to g^\star\) and the map \(g\mapsto\pi_g\) is smooth, the endpoint
couplings converge. The convergence of path measures then follows from the
linear reconstruction formula \eqref{eq:Pg-definition}. Thus convergence of the potential flow determines  convergence of the
full SB object through the endpoint disintegration formula.

\section{The Intrinsic Onsager Operator}
\label{sec:onsager-geometry}

We now identify the intrinsic geometry behind the induced marginal dynamics
$m_t=m_{g_t}$, hereafter the Schr\"odinger Bridge Onsager Flow (SBOF).
Logarithmic potential flows on dual variables have appeared in recent
Sinkhorn-type algorithms
\cite{leger2021gradient,mishchenko2019sinkhorn,aubin2022mirror,
karimi2024sinkhornflow}. The novel structure here is the Onsager geometry
generated by the Schr\"odinger semi-dual parametrization itself.
Differentiating the marginal map $g\mapsto m_g$ produces the operator
\begin{equation}
\label{eq:Kg-intro-onsager}
\mathsf K_g
=
Dm_g
=
\nabla^2\mathcal J(g),
\end{equation}
which acts as the Onsager mobility in the marginal equation
\begin{equation}
\label{eq:SBOF-marginal}
\dot m_t
=
-\mathsf K_{g_t}
\log\frac{m_t}{\nu}.
\end{equation}
We will show below that \eqref{eq:SBOF-marginal} is a nonlocal gradient flow
of the relative entropy with respect to the bridge-dependent mobility
$\mathsf K_{g_t}$.

In this sense, \(\mathsf K_g\) encodes the effective geometry of the
finite-state SB. Its coefficients depend on the reference
kernel \(K=e^Q\), the source law \(\mu\), and the current Schrödinger
potential \(g\), while its coercivity properties determine the conditioning
and relaxation of the marginal flow (\ref{eq:SBOF-marginal}). Equivalently, the associated Dirichlet
form encodes, through the current bridge conditional law, the directions in
terminal marginal space that are efficiently corrected by the bridge
dynamics, and the directions in which the bridge is ill-conditioned. Thus
\(\mathsf K_g\) is not merely the Hessian of the semi-dual functional; it is
the bridge-induced Onsager geometry through which the entropy force is
converted into marginal motion \cite{onsager1931reciprocal,otto2001geometry,mielke2011geodesic}.

\subsection{Differential of the marginal map}
\label{subsec:differential-marginal-map}

We first compute the differential of the map \(g\mapsto m_g\). Recall that
\begin{align*}
p_g(y|x)
=
\frac{K(x,y)e^{g(y)}}{Z_g(x)},
\qquad
Z_g(x)=\sum_{z\in S}K(x,z)e^{g(z)},
\end{align*}
and
\begin{align*}
m_g(y)=\sum_{x\in S}\mu(x)p_g(y|x).
\end{align*}

\begin{proposition}
\label{prop:Dmg}
Assume that \eqref{eq:positivity} holds. The map \(g\mapsto m_g\) is \(C^\infty\) on \(\mathbb R^S\). For every
\(g\in\mathbb R^S\), its differential
\(Dm_g:\mathbb R^S\to\mathbb R^S\) is the linear operator 
\begin{equation}
\label{eq:Kg}
\mathsf K_g
:=
Dm_g
=\nabla^2\mathcal J(g)=
\sum_{x\in S}\mu(x)
\left[
\operatorname{Diag}(p_g(\cdot|x))
-
p_g(\cdot|x)p_g(\cdot|x)^\top
\right].
\end{equation}
Equivalently, for every \(h\in \mathbb R^S\),
\begin{equation}
\label{eq:Dmg-action}
(Dm_g[h])(y)
=
\sum_{x\in S}
\mu(x)p_g(y|x)
\left(
h(y)-\sum_{z\in S}p_g(z|x)h(z)
\right),
\qquad \forall y\in S.
\end{equation}
\end{proposition}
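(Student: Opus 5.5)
The plan is a direct computation. I would begin with smoothness. Under \eqref{eq:positivity}, each $Z_g(x)=\sum_z K(x,z)e^{g(z)}$ is a finite sum of exponentials of linear functions with positive coefficients. It is therefore $C^\infty$ and strictly positive on $\mathbb R^S$. Hence $p_g(y|x)=K(x,y)e^{g(y)}/Z_g(x)$ is $C^\infty$ as a quotient with nonvanishing denominator. The map $m_g=\sum_x\mu(x)p_g(\cdot|x)$ is a finite linear combination of these, so it is $C^\infty$ as well.

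Next I would compute the partial derivatives of the conditional law. Writing $\log p_g(y|x)=\log K(x,y)+g(y)-\log Z_g(x)$, one gets $\partial_{g(z)}\log Z_g(x)=K(x,z)e^{g(z)}/Z_g(x)=p_g(z|x)$. This gives
\[
\frac{\partial p_g(y|x)}{\partial g(z)}=p_g(y|x)\bigl(\delta_{yz}-p_g(z|x)\bigr).
\]
Multiplying by $\mu(x)$ and summing over $x$, the Jacobian entries are
\[
\frac{\partial m_g(y)}{\partial g(z)}=\sum_x\mu(x)\bigl[p_g(y|x)\delta_{yz}-p_g(y|x)p_g(z|x)\bigr].
\]
In matrix form this is exactly $\sum_x\mu(x)[\operatorname{Diag}(p_g(\cdot|x))-p_g(\cdot|x)p_g(\cdot|x)^\top]$, as claimed in \eqref{eq:Kg}. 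Applying this matrix to $h$ gives $\sum_x\mu(x)p_g(y|x)\bigl(h(y)-\sum_z p_g(z|x)h(z)\bigr)$, which is \eqref{eq:Dmg-action}.

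Finally, for the identity with the Hessian I would invoke Proposition~\ref{prop:gradient-J}: $\nabla\mathcal J(g)=m_g-\nu$ and $\nu$ does not depend on $g$, so $\nabla^2\mathcal J(g)=D(\nabla\mathcal J)(g)=Dm_g$. The computed matrix is visibly symmetric, which is consistent with it being a Hessian.

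There is no real obstacle here. The only point needing care is the bookkeeping of the Kronecker delta, which produces the diagonal term. For the Hessian identity I should also note that the Jacobian of the gradient agrees with the Hessian under the standard Euclidean identification of $\mathbb R^S$ with its dual.
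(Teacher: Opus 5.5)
Your proposal is correct and follows the same route as the paper: smoothness from $Z_g(x)>0$, differentiation of $p_g(y|x)$ with respect to $g$, summation against $\mu$, and $\nabla^2\mathcal J(g)=Dm_g$ from $\nabla\mathcal J(g)=m_g-\nu$. The only difference is cosmetic. You compute the Jacobian entrywise via the logarithmic derivative, whereas the paper computes the directional derivative $Dp_g(y|x)[h]$ via the quotient rule; both give the same formula.
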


\begin{proof}
Since \(Z_g(x)>0\) for all \(x\), the map \(g\mapsto p_g(\cdot|x)\) is
\(C^\infty\), and so is \(g\mapsto m_g\). Let \(h:S\to\mathbb R\). Differentiating \(Z_g(x)\) in the direction \(h\) gives
\[
DZ_g(x)[h]
=
\sum_{z\in S}K(x,z)e^{g(z)}h(z)
=
Z_g(x)\sum_{z\in S}p_g(z|x)h(z).
\]
Applying the quotient rule to
\(
p_g(y|x)
\),
then yields
\[
Dp_g(y|x)[h]
=
p_g(y|x)
\left(
h(y)-\sum_{z\in S}p_g(z|x)h(z)
\right).
\]
Summing over \(x\) with weights \(\mu(x)\) gives
\eqref{eq:Dmg-action}. In matrix form, this is \eqref{eq:Kg}. Finally, by
Proposition~\ref{prop:gradient-J},
$
\nabla\mathcal J(g)=m_g-\nu.$ Differentiating once more yields $\nabla^2\mathcal J(g)=Dm_g=\mathsf K_g$.
The matrix representation \eqref{eq:Kg} follows immediately from the
directional formula \eqref{eq:Dmg-action}. This completes the proof.

\end{proof}

The operator \(\mathsf K_g\) is symmetric and positive semidefinite. Indeed,
for every \(f:S\to\mathbb R\),
\begin{equation}
\label{eq:fKgf}
\langle f,\mathsf K_g f\rangle
=
\sum_{x\in S}
\mu(x)
\operatorname{Var}_{p_g(\cdot|x)}(f).
\end{equation}
In particular, $\mathsf K_g\mathbf 1=0.$
Under the positivity assumption \eqref{eq:positivity}, each
\(p_g(\cdot|x)\) has full support. Hence the right-hand side of
\eqref{eq:fKgf} vanishes if and only if \(f\) is constant on \(S\). Thus
\(\mathsf K_g\) is strictly positive on the gauge-fixed  subspace
\(\mathcal H\).

The covariance representation of $\mathsf K_g$ induces a nonlocal
Dirichlet form on the terminal state space. For $g\in\mathbb R^S$, define
\begin{equation}
\label{eq:dirichlet-energy}
\mathcal E_g(f)
:=
\langle f,\mathsf K_g f\rangle
=
\sum_{x\in S}
\mu(x)\operatorname{Var}_{p_g(\cdot|x)}(f),
\qquad
f\in\mathbb R^S.
\end{equation}
Equivalently, using \eqref{eq:dirichlet-energy} and the identity
\[
\operatorname{Var}_p(f)
=
\frac12
\sum_{y,z\in S}
p(y)p(z)(f(y)-f(z))^2,
\]
we obtain the nonlocal Dirichlet representation
\begin{equation}
\label{eq:dirichlet-form}
\mathcal E_g(f)
=
\frac12
\sum_{y,z\in S}
A_g(y,z)(f(y)-f(z))^2,
\end{equation}
with bridge-induced conductances
\begin{equation}
\label{eq:conductances}
A_g(y,z)
=
\sum_{x\in S}
\mu(x)p_g(y|x)p_g(z|x).
\end{equation}

The coefficients \(A_g(y,z)\) are symmetric and nonnegative. Under
\eqref{eq:positivity}, they are strictly positive for all \(y,z\in S\).
Thus $\mathcal E_g$ is the Dirichlet form of a nonlocal jump operator on
the complete weighted graph over the terminal state space.

The weights $A_g(y,z)$ should be viewed as bridge-induced conductances:
they are not prescribed by a fixed terminal graph or by a fixed reversible
generator, but are generated by the current tilted bridge kernel
$p_g(\cdot|x)$. In particular, terminal states $y$ and $z$ are strongly
coupled when they have large joint conditional weight from the same initial
state under the tilted reference dynamics. Along the potential dynamic trajectory, these
conductances become time-dependent through
\[
A_{g_t}(y,z)
=
\sum_{x\in S}
\mu(x)p_{g_t}(y|x)p_{g_t}(z|x),
\]
so that the induced Onsager geometry evolves with the potential flow itself.

\begin{remark}
\label{rem:bridge-curvature}
The time-dependent family $\{\mathsf K_{g_t}\}_{t\ge0}$ defines a
\emph{non-autonomous} Riemannian metric on the probability simplex
$\mathcal P(S)$, whose metric tensor at $m_{g_t}$ is the inverse of the
Onsager mobility $\mathsf K_{g_t}$ restricted to the tangent space
$T_{m_{g_t}}\mathcal P(S)=\{v\in\mathbb R^S:\sum_y v(y)=0\}$.
In classical discrete gradient-flow theory \cite{maas2011gradient,
erbar2012ricci}, the metric is fixed and curvature is studied via
Bochner inequalities for the associated Dirichlet form. Here, the metric
itself evolves with the Schr\"odinger potential, raising the question of
whether a \emph{dynamical curvature} can be defined for the family
$\mathcal E_{g_t}$. A positive answer would yield quantitative
acceleration estimates for the SBOF beyond the uniform Poincar\'e
inequality of Section~\ref{sec:exponential}. We leave this direction for
future work.
\end{remark}

\subsection{Marginal Onsager dynamics and entropy dissipation}
\label{subsec:marginal-onsager-dissipation}

We now pass from the potential evolution to the induced terminal marginal
evolution. Let $g_t$ solve the potential flow \eqref{eq:SBOF} with
$g_0\in\mathcal H$, and set $m_t=m_{g_t}$. By the chain rule and
Proposition~\ref{prop:Dmg},
\[
\dot m_t=Dm_{g_t}[\dot g_t]=\mathsf K_{g_t}\dot g_t.
\]
Using \eqref{eq:SBOF} and $\mathsf K_{g_t}\mathbf 1=0$, we obtain
\begin{equation}
\label{eq:dotmt}
\dot m_t=-\mathsf K_{g_t}\log\frac{m_t}{\nu}.
\end{equation}

Since the first variation of the relative entropy is
$\frac{\delta}{\delta m}\KL(m\|\nu)=\log\frac{m}{\nu}+\mathbf 1$,
and $\mathsf K_{g_t}\mathbf 1=0$, equation \eqref{eq:dotmt} can be
written in the Onsager form
\begin{equation}
\label{eq:onsager-form}
\dot m_t=-\mathsf K_{g_t}\frac{\delta}{\delta m}\KL(m_t\|\nu).
\end{equation}
Thus the thermodynamic force is $-\frac{\delta}{\delta m}\KL(m\|\nu)$,
and the bridge-induced mobility $\mathsf K_{g_t}$ converts this force into
the marginal velocity $\dot m_t$.

\begin{proposition}
\label{prop:marginal-dissipation}
Let $g_t$ solve \eqref{eq:SBOF} with $g_0\in\mathcal H$, and set
$m_t=m_{g_t}$. Then, for all $t\ge0$,
\begin{equation}
\label{eq:marginal-dissipation}
\frac{d}{dt}\KL(m_t\,\|\,\nu)
=-\left\langle\log\frac{m_t}{\nu},\mathsf K_{g_t}\log\frac{m_t}{\nu}\right\rangle
=-\mathcal E_{g_t}\!\left(\log\frac{m_t}{\nu}\right).
\end{equation}
\end{proposition}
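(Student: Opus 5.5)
The plan is a chain-rule computation for $t\mapsto \KL(m_t\|\nu)$ along the smooth curve $m_t=m_{g_t}$, followed by substitution of the marginal equation \eqref{eq:dotmt}.

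First, regularity. By Theorem~\ref{thm:SBOF-convergence}, $g_t$ is a global $C^1$ solution of \eqref{eq:SBOF}. By Proposition~\ref{prop:Dmg}, the map $g\mapsto m_g$ is $C^\infty$, so $t\mapsto m_t$ is $C^1$. By \eqref{eq:positivity}, $m_t(y)>0$ for every $y$, and therefore
\[
F(m)=\sum_{y}m(y)\log\frac{m(y)}{\nu(y)}
\]
is smooth in a neighbourhood of each $m_t$. Hence $t\mapsto\KL(m_t\|\nu)$ is differentiable for all $t\ge0$, with a one-sided derivative at $t=0$.

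Next, the derivative itself. Differentiating termwise gives
\[
\frac{d}{dt}\KL(m_t\|\nu)=\sum_y \dot m_t(y)\Bigl(\log\frac{m_t(y)}{\nu(y)}+1\Bigr).
\]
The constant term vanishes: $\sum_y\dot m_t(y)=\frac{d}{dt}\sum_y m_t(y)=0$ because each $m_t$ is a probability vector. Equivalently, $\langle\mathbf 1,\mathsf K_{g_t}h\rangle=\langle\mathsf K_{g_t}\mathbf 1,h\rangle=0$ by the symmetry of $\mathsf K_{g_t}$ together with $\mathsf K_{g_t}\mathbf 1=0$. We are left with $\langle\log(m_t/\nu),\dot m_t\rangle$. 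Substituting \eqref{eq:dotmt}, which came from the chain rule $\dot m_t=\mathsf K_{g_t}\dot g_t$ and the annihilation of the gauge term, yields
\[
\frac{d}{dt}\KL(m_t\|\nu)=-\Bigl\langle\log\frac{m_t}{\nu},\mathsf K_{g_t}\log\frac{m_t}{\nu}\Bigr\rangle.
\]
The last equality in \eqref{eq:marginal-dissipation} is then the definition \eqref{eq:dirichlet-energy} of $\mathcal E_g$ applied to $f=\log(m_t/\nu)$.

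There is no genuine obstacle. The only points needing care are justifying differentiability (positivity of $m_t$ and global existence) and checking that the additive constants drop out. The $+\mathbf 1$ in the first variation and the gauge term in \eqref{eq:SBOF} both disappear, by mass conservation and by $\mathsf K_g\mathbf 1=0$ respectively. If desired, one can add that the right-hand side is $\le0$ by \eqref{eq:dirichlet-form}, and that it vanishes only when $\log(m_t/\nu)$ is constant, i.e. only when $m_t=\nu$.
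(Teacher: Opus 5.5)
Your proposal is correct and follows the paper's proof essentially step for step: differentiate $\KL(m_t\|\nu)$, drop the constant $\mathbf 1$ from the first variation using $\langle \mathbf 1,\dot m_t\rangle=0$, substitute \eqref{eq:dotmt}, and conclude from the definition of $\mathcal E_g$. Your justification of differentiability (positivity of $m_t$ plus global existence) and the alternative argument via symmetry and $\mathsf K_{g_t}\mathbf 1=0$ are valid details that the paper leaves implicit.
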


\begin{proof}
Since $m_t$ is a probability distribution for all $t$, we have
$\langle\mathbf 1,\dot m_t\rangle=0$. Therefore
\[
\frac{d}{dt}\KL(m_t\|\nu)
=\left\langle\log\frac{m_t}{\nu}+\mathbf 1,\dot m_t\right\rangle
=\left\langle\log\frac{m_t}{\nu},\dot m_t\right\rangle.
\]
Using \eqref{eq:dotmt}, we obtain
\[
\frac{d}{dt}\KL(m_t\|\nu)
=-\left\langle\log\frac{m_t}{\nu},\mathsf K_{g_t}\log\frac{m_t}{\nu}\right\rangle.
\]
The final equality follows from the definition
$\mathcal E_g(f)=\langle f,\mathsf K_g f\rangle$.
\end{proof}

The quantity
\[
\mathcal I_{g_t}(m_t\|\nu):=\mathcal E_{g_t}\!\left(\log\frac{m_t}{\nu}\right)
\]
will be referred to as the nonlocal Fisher information associated with the
bridge-induced Onsager geometry. Identity~\eqref{eq:marginal-dissipation}
plays the same structural role as the entropy-production identity in
discrete gradient-flow geometries for Markov chains
\cite{maas2011gradient,mielke2011geodesic,erbar2012ricci}. It therefore
suggests a bridge-induced analogue of curvature or Bochner theory for the
nonlocal forms $\mathcal E_g$. We do not pursue this direction here and
leave its quantitative development for future work.

\section{Uniform Functional Inequalities and Exponential Relaxation}
\label{sec:exponential}

The entropy identity \eqref{eq:marginal-dissipation} expresses dissipation
through the bridge-induced Dirichlet form $\mathcal E_{g_t}$. Because the
Onsager operator $\mathsf K_{g_t}$ evolves with the potential trajectory,
the coercivity of $\mathcal E_{g_t}$ is not fixed. To obtain a quantitative
convergence rate for the SBOF, we establish a uniform Poincar\'e inequality
for the family $\{\mathcal E_g\}_{g\in\mathcal C_{g_0}}$ on the compact
semi-dual sublevel set determined by the initial condition, and combine it
with an entropy--variance comparison for $\log(m_g/\nu)$.

\subsection{Uniform control of the Onsager geometry}
\label{subsec:uniform-control}

Fix $g_0\in\mathcal H$ and define the semi-dual sublevel set
\begin{equation}
\label{eq:sublevel-set}
\mathcal C_{g_0}
=
\left\{g\in\mathcal H:\mathcal J(g)\le \mathcal J(g_0)\right\}.
\end{equation}
By the Lyapunov identity \eqref{eq:J-dissipation}, every solution of the
potential flow \eqref{eq:SBOF} satisfies $g_t\in\mathcal C_{g_0}$ for all
$t\ge0$. Since $\mathcal J$ is coercive on $\mathcal H$, the set
$\mathcal C_{g_0}$ is compact. The maps $g\mapsto p_g$, $g\mapsto m_g$,
and $g\mapsto\mathsf K_g$ are continuous on this compact set; hence the
family of nonlocal Dirichlet forms $\mathcal E_g$ is uniformly controlled
along the trajectory.

We first establish uniform coercivity.

\begin{theorem}
\label{thm:uniform-poincare}
Let $g_0\in\mathcal H$. There exists $\lambda_{g_0}>0$ such that, for all
$g\in\mathcal C_{g_0}$ and all $f:S\to\mathbb R$ with
$\mathbb E_\nu[f]=0$,
\begin{equation}
\label{eq:PI}
\langle f,\mathsf K_g f\rangle
\ge
\lambda_{g_0}\|f\|_{L^2(\nu)}^2.
\end{equation}
\end{theorem}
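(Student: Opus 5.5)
The plan is to combine a pointwise lower bound on the conductances \(A_g(y,z)\) with a Poincaré inequality for the complete graph with constant weights, and then use compactness of \(\mathcal C_{g_0}\) to make the constant uniform.

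First, by the Dirichlet representation \eqref{eq:dirichlet-form},
\[
\langle f,\mathsf K_g f\rangle=\tfrac12\sum_{y,z\in S}A_g(y,z)(f(y)-f(z))^2 .
\]
Under \eqref{eq:positivity}, each \(A_g(y,z)=\sum_x\mu(x)p_g(y|x)p_g(z|x)\) is strictly positive. The map \(g\mapsto A_g(y,z)\) is continuous on \(\mathbb R^S\), since \(p_g\) is smooth by Proposition~\ref{prop:Dmg}. The set \(\mathcal C_{g_0}\) is compact because \(\mathcal J\) is coercive on \(\mathcal H\) (Theorem~\ref{thm:exist-unique}); it is closed as a sublevel set of a continuous function on a closed subspace. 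Hence
\[
a_{g_0}:=\min_{g\in\mathcal C_{g_0}}\ \min_{y,z\in S}A_g(y,z)>0 .
\]
One can also write this bound explicitly. On \(\mathcal C_{g_0}\) one has \(\|g\|_\infty\le R\), so \(p_g(y|x)\ge K_{\min}e^{-2R}\), but the compactness argument is enough.

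Second, for \(g\in\mathcal C_{g_0}\) I would bound
\[
\langle f,\mathsf K_g f\rangle\ge \frac{a_{g_0}}{2}\sum_{y,z}(f(y)-f(z))^2 .
\]
The right-hand side should then be compared with \(\|f\|^2_{L^2(\nu)}\) for \(f\) with \(\mathbb E_\nu[f]=0\). Since \(\nu(y)\le 1\), one has \(\sum_{y,z}(f(y)-f(z))^2\ge \sum_{y,z}\nu(y)\nu(z)(f(y)-f(z))^2=2\operatorname{Var}_\nu(f)\). Because \(\mathbb E_\nu f=0\), \(\operatorname{Var}_\nu(f)=\|f\|^2_{L^2(\nu)}\). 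So \(\lambda_{g_0}=a_{g_0}\) works. A sharper route is to use the uniform measure, where the sum equals \(2N\|f-\bar f\|_2^2\), but the crude bound suffices for the claim.

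There is a more abstract alternative. Minimize the continuous function \((g,f)\mapsto\langle f,\mathsf K_gf\rangle\) over the compact set \(\mathcal C_{g_0}\times\{f:\mathbb E_\nu f=0,\ \|f\|_{L^2(\nu)}=1\}\). The minimum is positive because \(\mathsf K_g f=0\) only for constant \(f\), and a constant with zero \(\nu\)-mean is zero. This gives the same conclusion, and I would mention it as a cross-check.

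The only genuinely nontrivial input is the compactness of \(\mathcal C_{g_0}\), i.e.\ coercivity of \(\mathcal J\) on \(\mathcal H\). I take this from Theorem~\ref{thm:exist-unique}. Everything else is elementary, and the main care point is checking that \(\mathcal C_{g_0}\) is closed as well as bounded, which follows from the continuity of \(\mathcal J\).
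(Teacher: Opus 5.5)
Your proof is correct, and your main route differs from the paper's. The paper argues abstractly. It defines $\lambda(g)$ as the infimum of the Rayleigh quotient of $\mathsf K_g$ over the $L^2(\nu)$-unit sphere of $\mathcal H$, shows $|\lambda(g)-\lambda(g')|\le\|\mathsf K_g-\mathsf K_{g'}\|$, and takes the minimum over the compact set $\mathcal C_{g_0}$.

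Your ``cross-check'' is the same idea. You minimize jointly over $\mathcal C_{g_0}$ times that unit sphere. This is a little more economical, since it needs no separate continuity statement for $\lambda$.

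Your primary argument goes through the Dirichlet representation instead. A uniform lower bound $A_g(y,z)\ge a_{g_0}$, combined with $\nu(y)\nu(z)\le 1$ and the identity $\sum_{y,z}\nu(y)\nu(z)(f(y)-f(z))^2=2\operatorname{Var}_\nu(f)$, gives $\lambda_{g_0}=a_{g_0}$. With your bound $p_g(y|x)\ge K_{\min}e^{-2R}$, which correctly uses that $K$ is stochastic, this becomes $\lambda_{g_0}\ge K_{\min}^2e^{-4R}$.

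What your route buys is an explicit constant, and $R$ can itself be made explicit:
\begin{itemize}
\item On $\mathcal H$ one has $\mathcal J(g)\ge\log K_{\min}+\max_y g(y)$, since $Z_g(x)\ge K_{\min}e^{\max g}$.
\item Hence on $\mathcal C_{g_0}$, $\max_y g(y)\le \mathcal J(g_0)-\log K_{\min}$.
\item The constraint $\mathbb E_\nu[g]=0$ then bounds $-\min_y g(y)$ by this quantity divided by $\min_y\nu(y)$.
\end{itemize}
This is exactly the kind of explicit conditioning parameter the paper lists as an open direction in its conclusion.

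The price is sharpness. You discard the weights $\nu(y)\nu(z)$ and keep only the worst conductance, which can be exponentially small in $R$. The paper's $\lambda_{g_0}$, by construction, is the optimal uniform constant on $\mathcal C_{g_0}$, although its proof gives no way to compute it.
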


\begin{proof}
For fixed $g$, the covariance representation gives
\[
\langle f,\mathsf K_g f\rangle
=
\sum_{x\in S}\mu(x)\operatorname{Var}_{p_g(\cdot|x)}(f).
\]
Since $K(x,y)>0$, every $p_g(\cdot|x)$ has full support on $S$. Hence
$\langle f,\mathsf K_g f\rangle=0$ if and only if $f$ is constant; if
$\mathbb E_\nu[f]=0$, then $f=0$. Thus $\mathsf K_g$ is strictly positive
on the $\nu$-mean-zero subspace. Define
\[
\lambda(g)
=
\inf\left\{\langle f,\mathsf K_g f\rangle:
\mathbb E_\nu[f]=0,\;
\|f\|_{L^2(\nu)}=1\right\}.
\]
The map $g\mapsto\lambda(g)$ is continuous, as it is the smallest
eigenvalue of the restriction of $\mathsf K_g$ to a fixed subspace.
Since $\mathcal C_{g_0}$ is compact,
\begin{equation}\label{lambdag0}\lambda_{g_0}:=\min_{g\in\mathcal C_{g_0}}\lambda(g)>0.
\end{equation}
\end{proof}

Next, we compare relative entropy with the $L^2(\nu)$-fluctuation of the
logarithmic density ratio.

\begin{lemma} 
\label{lem:entropy-variance}
Let $g_0\in\mathcal H$. There exists $C_{g_0}>0$ such that, for every
$g\in\mathcal C_{g_0}$,
\begin{equation}
\label{eq:entropy-variance}
\KL(m_g\,\|\,\nu)
\le
C_{g_0}
\left\|
\log\frac{m_g}{\nu}
-
\mathbb E_\nu\!\left[
\log\frac{m_g}{\nu}
\right]
\right\|_{L^2(\nu)}^2.
\end{equation}
\end{lemma}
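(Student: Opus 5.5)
Write $u=\log(m_g/\nu)$ and $\bar u=\mathbb E_\nu[u]$. The strategy is to combine compactness of $\mathcal C_{g_0}$, which gives uniform two-sided bounds on the density ratio $r=m_g/\nu$, with an elementary inequality that bounds $\KL$ by the $L^2(\nu)$-variance of the log-ratio.

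\textbf{Step 1 (uniform bounds).} The map $g\mapsto m_g$ is continuous and strictly positive by \eqref{eq:positivity}, and $\mathcal C_{g_0}$ is compact. Hence there are constants $0<a\le b<\infty$, depending only on $g_0$, with
\[
a\le r(y)=\frac{m_g(y)}{\nu(y)}\le b \qquad \text{for all } g\in\mathcal C_{g_0},\ y\in S.
\]
Therefore $\|u\|_\infty\le M:=\max(|\log a|,|\log b|)$, and $\|u-\bar u\|_\infty\le 2M$.

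\textbf{Step 2 (KL as a Bregman divergence).} Since $\sum_y m_g(y)=1$, we have $\mathbb E_\nu[e^{u}]=1$. Using $\mathbb E_\nu[r-1]=0$,
\[
\KL(m_g\|\nu)=\mathbb E_\nu[r\log r-r+1]=\mathbb E_\nu\bigl[\phi(u)\bigr], \qquad \phi(s)=se^{s}-e^{s}+1 .
\]
Here $\phi(0)=\phi'(0)=0$ and $\phi''(s)=(1+s)e^{s}$. The difficulty is that this bounds $\KL$ by $\mathbb E_\nu[u^2]$, not by the centered quantity. I would instead use the cleaner route
\[
\KL(m_g\|\nu)\le \KL(m_g\|\nu)+\KL(\nu\|m_g)=\mathbb E_\nu[(r-1)\log r]=\mathbb E_\nu[(e^{u}-1)u].
\]

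\textbf{Step 3 (centering).} Since $\mathbb E_\nu[e^{u}-1]=0$, we may subtract the constant $\bar u$ in the second factor:
\[
\mathbb E_\nu[(e^{u}-1)u]=\mathbb E_\nu\bigl[(e^{u}-1)(u-\bar u)\bigr].
\]
Next, $\bar u=\mathbb E_\nu[\log r]\le\log\mathbb E_\nu[r]=0$ by Jensen, so $e^{\bar u}\le 1$. We write
\[
e^{u}-1=(e^{u}-e^{\bar u})+(e^{\bar u}-1).
\]
The constant $e^{\bar u}-1$ integrates to zero against $u-\bar u$. By the mean value theorem on $[-M,M]$, $|e^{u}-e^{\bar u}|\le e^{M}|u-\bar u|$. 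Hence
\[
\KL(m_g\|\nu)\le e^{M}\,\|u-\bar u\|_{L^2(\nu)}^2 ,
\]
which is \eqref{eq:entropy-variance} with $C_{g_0}=e^{M}=\max(b,1/a)$.

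\textbf{Main obstacle.} The only delicate point is the centering in Step 3. Jensen's inequality shows that $\bar u\ne0$ in general, so the symmetrized-KL identity together with the normalization $\mathbb E_\nu[e^u]=1$ is what absorbs the mean. Once that is in place, the remaining work is the compactness bound of Step 1, which is routine given Theorem~\ref{thm:exist-unique} and the continuity of $m_g$.
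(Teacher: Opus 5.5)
Your proof is correct, but it takes a different route from the paper's. The paper goes through a $\chi^2$-type bound. It applies Taylor's theorem to $\Phi(r)=r\log r-r+1$, using $\Phi''=1/r\le 1/a$, to get $\KL(m_g\|\nu)\le \frac{1}{2a}\|r-1\|_{L^2(\nu)}^2=\frac{1}{2a}\operatorname{Var}_\nu(r)$. It then uses the Lipschitz bound $|r(y)-r(z)|\le b\,|\ell(y)-\ell(z)|$ and the pairwise formula for the variance to get $\operatorname{Var}_\nu(r)\le b^2\operatorname{Var}_\nu(\log r)$. This gives $C_{g_0}=b^2/(2a)$.

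You instead dominate $\KL(m_g\|\nu)$ by the Jeffreys divergence $\mathbb E_\nu[(e^u-1)u]$. You then use the normalization $\mathbb E_\nu[e^u]=1$ to rewrite it as $\mathbb E_\nu[(e^u-e^{\bar u})(u-\bar u)]$, and a single mean-value estimate finishes.

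What your route buys:
\begin{itemize}
\item It is shorter.
\item It proves a stronger statement: the symmetrized divergence, which is exactly the dissipation rate of $\mathcal J$ in \eqref{eq:J-dissipation}, is controlled by the centered log-variance.
\item It allows a sharper constant. Your Jensen remark is not needed as written, since $e^{\bar u}-1$ is a constant and integrates to zero against $u-\bar u$ whatever its sign. It can, however, improve your constant: $u\le\log b$ and $\bar u\le 0\le\log b$, so the intermediate point satisfies $e^{\xi}\le b$. Hence $C_{g_0}=b$ works, and no lower bound on $m_g/\nu$ is needed at all.
\end{itemize}

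What the paper's route buys is the asymptotically sharp constant when the ratio bounds $a,b$ are close to $1$. There $b^2/(2a)\approx\frac12$, matching $\KL\approx\frac12\operatorname{Var}_\nu(\log r)$ near equilibrium. Passing through the Jeffreys divergence loses a factor of $2$ in that regime. When $b>2a$, the refined constant $b$ from your argument is the better one.
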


\begin{proof}
Since $g\mapsto m_g$ is continuous and $\mathcal C_{g_0}$ is compact,
the set $\{m_g:g\in\mathcal C_{g_0}\}$ is a compact subset of the interior
of the probability simplex. Hence there exist constants
$0<a_{g_0}\le b_{g_0}<\infty$ such that
$a_{g_0}\le m_g(y)/\nu(y)\le b_{g_0}$ for all $g\in\mathcal C_{g_0}$ and
$y\in S$. Set $r_g=m_g/\nu$, $\ell_g=\log r_g$, and
$\bar\ell_g=\mathbb E_\nu[\ell_g]$. Since $\mathbb E_\nu[r_g]=1$,
\[
\KL(m_g\|\nu)=\mathbb E_\nu[r_g\log r_g]
=\mathbb E_\nu[\Phi(r_g)],
\]
where $\Phi(r)=r\log r-r+1$. On $[a_{g_0},b_{g_0}]$, we have $\Phi(r)\le C_1(r-1)^2$ for some
$C_1>0$, and $|r_g(y)-r_g(z)|\le b_{g_0}|\ell_g(y)-\ell_g(z)|$. Thus
\[
\KL(m_g\|\nu)\le C_1\|r_g-1\|_{L^2(\nu)}^2
=C_1\operatorname{Var}_\nu(r_g)
\le C_1 b_{g_0}^2\operatorname{Var}_\nu(\ell_g),
\]
which gives \eqref{eq:entropy-variance} with
$C_{g_0}=C_1 b_{g_0}^2$.
\end{proof}

\subsection{Exponential convergence}
\label{subsec:exponential-convergence}

We now combine the marginal entropy-dissipation identity
\eqref{eq:marginal-dissipation}, the uniform Poincar\'e inequality
\eqref{eq:PI}, and the entropy--variance comparison
\eqref{eq:entropy-variance}.

\begin{theorem}
\label{thm:exponential-convergence}
Let $g_t$ solve the potential flow \eqref{eq:SBOF} with $g_0\in\mathcal H$,
and set $m_t=m_{g_t}$. Then
\begin{equation}
\label{eq:exp-convergence}
\KL(m_t\|\nu)
\le
\KL(m_0\|\nu)
\exp(-\omega_{g_0}t),
\qquad t\ge0,
\end{equation}
where $\omega_{g_0}=\lambda_{g_0}/C_{g_0}$, with $\lambda_{g_0}$ from
Theorem~\ref{thm:uniform-poincare} and $C_{g_0}$ from
Lemma~\ref{lem:entropy-variance}.
\end{theorem}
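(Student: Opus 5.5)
The plan is a Grönwall argument applied to $\KL(m_t\|\nu)$. It combines three earlier results along the trajectory: the dissipation identity of Proposition~\ref{prop:marginal-dissipation}, the uniform Poincaré inequality of Theorem~\ref{thm:uniform-poincare}, and the entropy--variance comparison of Lemma~\ref{lem:entropy-variance}.

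First, by the Lyapunov identity \eqref{eq:J-dissipation}, the solution of Theorem~\ref{thm:SBOF-convergence} satisfies $g_t\in\mathcal C_{g_0}$ for all $t\ge0$. The constants $\lambda_{g_0}$ and $C_{g_0}$ therefore apply uniformly in $t$.

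Second, set $\ell_t=\log(m_t/\nu)$ and $\bar\ell_t=\mathbb E_\nu[\ell_t]$, and let $f_t=\ell_t-\bar\ell_t\mathbf 1$. Since $\mathsf K_{g_t}\mathbf 1=0$ and $\mathsf K_{g_t}$ is symmetric, we have $\mathcal E_{g_t}(\ell_t)=\langle f_t,\mathsf K_{g_t}f_t\rangle$. This centering step is the only point requiring care: the Poincaré inequality is stated only for $\nu$-mean-zero functions, while the dissipation involves the uncentered $\ell_t$. The symmetry of $\mathsf K_{g_t}$ together with $\mathsf K_{g_t}\mathbf 1=0$ makes the constant shift invisible.

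Third, since $\mathbb E_\nu[f_t]=0$, Theorem~\ref{thm:uniform-poincare} gives $\mathcal E_{g_t}(\ell_t)\ge\lambda_{g_0}\|f_t\|_{L^2(\nu)}^2$. Lemma~\ref{lem:entropy-variance} then gives $\|f_t\|_{L^2(\nu)}^2\ge C_{g_0}^{-1}\KL(m_t\|\nu)$. Inserting both bounds into \eqref{eq:marginal-dissipation} yields
\[
\frac{d}{dt}\KL(m_t\|\nu)\le-\frac{\lambda_{g_0}}{C_{g_0}}\KL(m_t\|\nu)=-\omega_{g_0}\KL(m_t\|\nu).
\]

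Finally, $t\mapsto\KL(m_t\|\nu)$ is $C^1$ because $g_t$ is a smooth solution and $m_g$ stays in the interior of the simplex. Differentiating $e^{\omega_{g_0}t}\KL(m_t\|\nu)$ therefore shows that this product is nonincreasing, which gives \eqref{eq:exp-convergence}. No further obstacle is expected, since all the analytic work was done in the preceding results.
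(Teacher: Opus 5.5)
Your proposal is correct and follows essentially the same route as the paper's proof. Both combine the dissipation identity, the centering of $\log(m_t/\nu)$ via $\mathsf K_{g_t}\mathbf 1=0$ and symmetry, the uniform Poincaré inequality on $\mathcal C_{g_0}$, the entropy--variance comparison, and Grönwall; you additionally spell out the invariance of $\mathcal C_{g_0}$ and the $C^1$ regularity, which the paper leaves implicit.
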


\begin{proof}
Set $h_t=\log(m_t/\nu)$. By Proposition~\ref{prop:marginal-dissipation},
\[
\frac{d}{dt}\KL(m_t\|\nu)=-\langle h_t,\mathsf K_{g_t}h_t\rangle.
\]
Since $\mathsf K_{g_t}\mathbf 1=0$ and $\mathsf K_{g_t}$ is symmetric, we may subtract the $\nu$-mean of $h_t$:
\[
\langle h_t,\mathsf K_{g_t}h_t\rangle
=\left\langle h_t-\mathbb E_\nu[h_t],
\mathsf K_{g_t}\bigl(h_t-\mathbb E_\nu[h_t]\bigr)\right\rangle.
\]
Because $g_t\in\mathcal C_{g_0}$, Theorem~\ref{thm:uniform-poincare} gives
\[
\langle h_t,\mathsf K_{g_t}h_t\rangle
\ge\lambda_{g_0}\|h_t-\mathbb E_\nu[h_t]\|_{L^2(\nu)}^2,
\]
and Lemma~\ref{lem:entropy-variance} yields
\[
\|h_t-\mathbb E_\nu[h_t]\|_{L^2(\nu)}^2
\ge\frac{1}{C_{g_0}}\KL(m_t\|\nu).
\]
Therefore
\[
\frac{d}{dt}\KL(m_t\|\nu)
\le-\frac{\lambda_{g_0}}{C_{g_0}}\KL(m_t\|\nu),
\]
and Gr\"onwall's inequality gives \eqref{eq:exp-convergence}.
\end{proof}

\section{Dynamic Reconstruction and Numerical Illustration}
\label{sec:dynamic-reconstruction}

The previous sections identify the gauge-fixed Schr\"odinger potential
$g^\star$, the induced SBOF governed by the Onsager operator
$\mathsf K_{g_t}$, and the associated nonlocal Dirichlet geometry. We now
reconstruct the full path-space dynamics from the terminal potential through
the space-time Doob transform \cite{ doob1957conditional, jamison1975markov,leonard2014survey}. This reconstruction is motivated by the recent surge of interest in
SB as theoretically grounded generative models, which
has established SB as a principled variational framework for diffusion-type
generation \cite{debortoli2021diffusion,peluchetti2023diffusion, shi2023diffusion,liu2023introduction, tang2026foundations, korotin2024light,gushchin2024light}. In discrete state
spaces, however, the geometry of probability dynamics is subtle: different
choices of graph, generator, mobility, or curvature lead to distinct
dissipation structures
\cite{maas2011gradient,mielke2011geodesic,erbar2012ricci,
ollivier2009ricci,fathi2018curvature}. The Doob transform shows how the
limiting potential $g^\star$ realizes the dynamic SB whose marginal
relaxation is encoded by the Onsager geometry identified above.

\subsection{Doob transform and bridge reconstruction}
\label{subsec:doob-reconstruction}

Let \(g:S\to\mathbb R\). Define the space-time harmonic function
\begin{equation}
\label{eq:harmonic-function}
\varphi_s^g=e^{(1-s)Q}e^g,\qquad s\in[0,1].
\end{equation}
Since \(K_t=e^{tQ}\) is a Markov semigroup and \(e^g>0\), the function
\(\varphi_s^g\) defined in \eqref{eq:harmonic-function} satisfies
\(\varphi_s^g(x)>0\) for all \(s\in[0,1]\) and \(x\in S\).
The corresponding Doob-transformed generator is defined, for \(x\neq y\), by
\begin{equation}
\label{eq:doob-generator}
Q_s^g(x,y)=Q(x,y)\frac{\varphi_s^g(y)}{\varphi_s^g(x)},
\end{equation}
with diagonal entries chosen so that rows sum to zero. The generator
\eqref{eq:doob-generator} is the classical space-time Doob \(h\)-transform
of the reference Markov chain
\cite{doob1957conditional,jamison1975markov,leonard2014survey}.

The time-inhomogeneous Markov chain generated by \eqref{eq:doob-generator},
initialized from \(X_0\sim\mu\), has the same conditional bridges as the
reference process and endpoint coupling \(\pi_g\). Equivalently, its path
measure is
\begin{align}
\label{eq:Pg-path}
\mathbf P^g(d\omega)
=
\sum_{x,y\in S}
\pi_g(x,y)\,
\mathbf R_\mu(d\omega\mid X_0=x,X_1=y).
\end{align}

In particular, when \(g=g^\star\), the endpoint marginal satisfies
\(m_{g^\star}=\nu\), and the path measure \eqref{eq:Pg-path} with
\(g=g^\star\) is the SB path measure. Thus
Theorem~\ref{thm:SBOF-convergence} and
Corollary~\ref{cor:coupling-path-convergence} together yield convergence
of the reconstructed dynamic bridge.

\subsection{Euler discretization}
\label{subsec:euler-discretization}

For the numerical illustration below, we approximate the potential flow
\eqref{eq:SBOF} by an explicit Euler scheme on a finite time interval
\([0,T]\), with \(T>0\) chosen to capture the long-time relaxation regime.
Let \(N_{\rm it}\in\mathbb N\) be the number of time steps and set
\begin{equation}
\label{eq:time-grid}
\Delta t=\frac{T}{N_{\rm it}},\qquad t_k=k\Delta t,\qquad k=0,\ldots,N_{\rm it}.
\end{equation}
Starting from \(g_0\in\mathcal H\), we define \(g_k\approx g_{t_k}\) via the
grid \eqref{eq:time-grid}. Given \(g_k\), compute the induced marginal
\(m_{g_k}\) and set
\begin{equation}
\label{eq:hk-def}
h_k=\log\frac{m_{g_k}}{\nu}.
\end{equation}
The explicit Euler update is
\begin{equation}
\label{eq:euler-sbof}
g_{k+1}
=
g_k
-
\Delta t
\left(
h_k
-
\mathbb E_\nu[h_k]\mathbf 1
\right),
\qquad
k=0,\ldots,N_{\rm it}-1,
\end{equation}
where \(h_k\) is defined in \eqref{eq:hk-def}. The subtraction of the
\(\nu\)-mean preserves the gauge in exact arithmetic: if
\(\mathbb E_\nu[g_k]=0\), then \(\mathbb E_\nu[g_{k+1}]=0\). In
floating-point computations, we additionally apply the projection
\begin{equation}
\label{eq:gauge-projection}
g_{k+1}
\leftarrow
g_{k+1}
-
\mathbb E_\nu[g_{k+1}]\mathbf 1
\end{equation}
after each step, in order to remove numerical drift in the gauge direction.
The projection \eqref{eq:gauge-projection} merely removes numerical drift
and leaves both the coupling \(\pi_{g_{k+1}}\) and the marginal
\(m_{g_{k+1}}\) unchanged.

At each time step we also compute the Onsager entropy-production diagnostic
\begin{equation}
\label{eq:discrete-fisher-diagnostic}
\mathcal I_{g_k}(m_{g_k}\|\nu)
=
\mathcal E_{g_k}\!\left(\log\frac{m_{g_k}}{\nu}\right)
=
\sum_{x\in S}
\mu(x)
\operatorname{Var}_{p_{g_k}(\cdot|x)}
\left(
\log\frac{m_{g_k}}{\nu}
\right),
\end{equation}
which follows from the covariance representation of \(\mathsf K_{g_k}\) and
can be evaluated without explicitly forming the full matrix
\(\mathsf K_{g_k}\). The diagnostic \eqref{eq:discrete-fisher-diagnostic}
measures the instantaneous entropy production along the discretized
trajectory.

\subsection{Rare-state numerical illustration}
\label{subsec:rare-state-illustration}

We illustrate the potential flow \eqref{eq:SBOF} on a challenging
finite-state bridge problem where the target law $\nu$ exhibits rare
modes with very small mass. The state space is a $30\times 30$ grid,
so that $N=900$. The reference generator $Q$ is a local continuous-time
random-walk generator on the grid, and $K=e^Q$ is computed explicitly.
The source law $\mu$ is a simple localized distribution, whereas the
target law $\nu$ is an eight-mode distribution with two low-mass modes.
This construction tests whether the logarithmic force controls
multiplicative errors $\log(m_g/\nu)$ in rare regions, rather than only
global absolute discrepancies.

We compute a high-accuracy reference solution using classical
Sinkhorn/IPF iterations \cite{peyre2019computational}. We compare the
terminal marginal $m_{g_T}$, the endpoint coupling $\pi_{g_T}$, and
the logarithmic relative error against this reference. In addition to
$\KL(m_{g_T}\|\nu)$ and total variation distance, we report
\begin{equation}
\label{eq:max-log-error}
\max_{y\in S}\left|\log\frac{m_{g_T}(y)}{\nu(y)}\right|,
\end{equation}
which measures the worst multiplicative terminal error. This diagnostic
is particularly relevant when $\nu(y)$ is very small, since small
absolute errors can correspond to large relative errors in rare states.
The rare-state max log-error is computed over the two lowest-mass modes
of $\nu$.

{\small
\begin{table}[t]
\centering
\small
\setlength{\tabcolsep}{4pt}
\renewcommand{\arraystretch}{1.18}
\begin{tabular}{|l|c|c|c|c|}
\hline
Method
& $\KL(m_T\|\nu)$
& $\TV(m_T,\nu)$
& Max log-error
& Rare max log-error \\
\hline
Sinkhorn/IPF
& $9.82{\times}10^{-15}$
& $6.64{\times}10^{-8}$
& $2.26{\times}10^{-7}$
& $1.14{\times}10^{-7}$ \\

\textbf{Potential flow}
& $\mathbf{<<10^{-15}}$
& $\mathbf{1.35{\times}10^{-12}}$
& $\mathbf{4.60{\times}10^{-12}}$
& $\mathbf{2.32{\times}10^{-12}}$ \\

\textbf{Euclidean residual}
& $6.13{\times}10^{-1}$
& $2.71{\times}10^{-1}$
& $2.07{\times}10^{1}$
& $9.35{\times}10^{0}$ \\
\hline
\end{tabular}
\caption{Eight-mode rare-state bridge, averaged over ten random seeds.
Two target modes carry very small probability mass. The potential flow
reaches reference-level terminal accuracy. The Euclidean residual update
fails to recover the target marginal and leaves large rare-state
log-errors.}
\end{table}
}

\begin{figure}[t]
    \centering
    \includegraphics[width=0.9\linewidth]{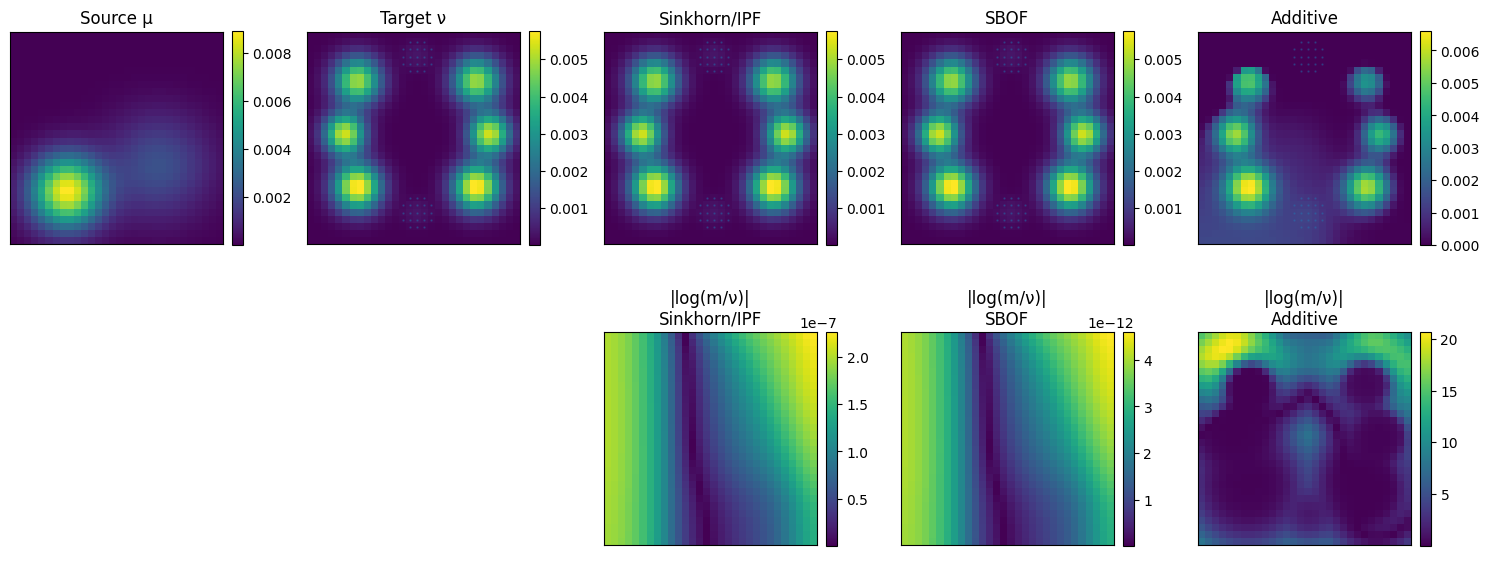}    \caption{Rare-state multimodal bridge. Top: target and learned terminal
    marginals. Bottom: relative log-error maps
    $|\log(m_T/\nu)|$. The log-error diagnostic \eqref{eq:max-log-error}
    reveals errors in rare regions that are nearly invisible in absolute mass.}
    \label{fig:rare-state-heatmaps}
\end{figure}

The results illustrate the role of the logarithmic force in controlling
relative terminal errors. While a Euclidean residual update can reduce
some absolute discrepancies, it may leave large multiplicative errors
in rare regions. By contrast, the potential flow acts directly on
$\log(m_g/\nu)$, which is the entropy force appearing in the Onsager
form \eqref{eq:onsager-form} of the SBOF. The numerical behavior is
therefore consistent with the entropy-dissipation identity
\eqref{eq:marginal-dissipation}, where the Dirichlet form
$\mathcal{E}_{g_t}$ measures the instantaneous entropy production
along the trajectory.

\section{Conclusion and Perspectives}
\label{sec:conclusion}

We have shown that the finite-state SB induces an Onsager geometry through
the semi-dual marginal map. The operator $\mathsf K_g=Dm_g=\nabla^2\mathcal J(g)$
acts as the mobility in the terminal marginal equation, and its Dirichlet form
is the entropy-production functional along the induced marginal flow. This
identifies a bridge-dependent nonlocal geometry generated by the Schr\"odinger
parametrization itself, not prescribed by any external structure.

This theoretical analysis arrives as discrete diffusion models are being
re-examined through geometric mechanics. Recent work has translated the
metric $W_K$ of \cite{maas2011gradient} and the JKO proximal scheme into
practical machine learning methodologies \cite{rancati2026learning},
demonstrating that the geometric ``machine'' underlying discrete diffusion
is becoming computationally tractable. The SBOF framework contributes to
this emerging picture by identifying the bridge-induced Onsager operator
$\mathsf K_g$ as the natural, state-dependent mobility for
entropy-dissipating transport, together with the rigorous dissipative
structure that any geometry-aware discrete generative model must satisfy.
We expect the interplay between such operator-theoretic foundations and
algorithmic advances in discrete gradient-flow learning to produce, in the
coming years, diffusion models that are simultaneously more theoretically
grounded and geometrically interpretable.

Several directions remain open. First, the exponential rate obtained here
rests on compactness of semi-dual sublevel sets. A quantitative theory
should relate the spectrum of $\mathsf K_g$ to the spectral gap of the
semigroup $K_t=e^{tQ}$ and to the laws $\mu,\nu$, turning the abstract
Poincar\'e constant into an explicit conditioning parameter. Such
estimates would be essential for coupling the SBOF with the numerical
geodesic solvers recently developed in the discrete JKO literature
\cite{rancati2026learning}.

Second, the nonlocal forms $\mathcal E_g(f)=\langle f,\mathsf K_g f\rangle$
suggest a bridge-induced analogue of discrete curvature theory. Existing
finite-state geometries start from a prescribed Markov generator and study
entropy convexity or Bochner inequalities. Here the mobility varies with
the Schr\"odinger potential. Understanding whether the forms
$\mathcal E_g$ admit curvature or Bochner-type estimates, and how they
depend on $Q,\mu,\nu$, is a natural next step; a positive answer would
yield quantitative acceleration criteria for the SBOF.

Third, strict positivity of $K,\mu,\nu$ ensures smoothness and coercivity.
Many applications involve sparse kernels, forbidden states, or structural
zeros. Extending the theory to such constrained supports would require a
support-adapted geometry, particularly for sequence-generation on Hamming
graphs where transitions are restricted to local token mutations.

Finally, the Doob-transform reconstruction suggests applications to
conditional and guided generation on finite state spaces
\cite{nisonoff2025unlocking,schiff2025simple,uehara2025reward,wallace2024diffusion}.
Perturbing the terminal potential by a reward $R$ changes the marginal
according to the linear response
$m_{g+\varepsilon R}=m_g+\varepsilon \mathsf K_g R+O(\varepsilon^2)$.
Thus $\mathsf K_g$ is the local response operator mapping potential
perturbations to marginal changes. This perspective may lead to
geometry-aware conditioning or fine-tuning procedures for discrete
generative models, especially in sequence-generation problems where rare
states and relative errors are important.

\bibliographystyle{siamplain}
\bibliography{references}

\appendix

\section{Dynamic-to-Static Reduction and Semi-Duality}

The proof is given in the supplementary material. We recall only the main structural points,
since they will be used later. The Hessian of \(\mathcal J\) is the
differential of the marginal map \(g\mapsto m_g\):
\begin{equation*}
\nabla^2\mathcal J(g)=Dm_g.
\end{equation*}
Moreover, for every \(f:S\to\mathbb R\),
\begin{equation*}
\label{eq:hessian-cov-preview1}
\left\langle f,\nabla^2\mathcal J(g)f\right\rangle
=
\sum_{x\in S}
\mu(x)\operatorname{Var}_{p_g(\cdot|x)}(f).
\end{equation*}
Thus \(\mathcal J\) is convex, and the positivity of \(K\) implies strict
convexity modulo additive constants. After restricting to the gauge-fixed
space \(\mathcal H\), coercivity and strict convexity yield a unique
minimizer \(g^\star\). The gradient identity \eqref{eq:grad-J} then gives
the Schrödinger marginal equation
\(
m_{g^\star}=\nu .
\)

\section{Static Theory on the Gauge-Fixed Space}
\label{app:static-theory}

We prove Theorem~\ref{thm:exist-unique}. The argument is based on three
facts: the semi-dual functional is coercive on the gauge-fixed space
\(\mathcal H\), it is strictly convex modulo additive constants, and its
first-order optimality condition is exactly the Schrödinger marginal
equation. The invariance under additive constants follows directly from
\(Z_{g+c\mathbf 1}=e^cZ_g\); the elementary verification is given in the
supplementary material.

\subsection{Coercivity}
\label{app:coercivity}

We first prove that \(\mathcal J\) is coercive on the gauge-fixed subspace
\[
\mathcal H
=
\left\{
g\in\mathbb R^S:\mathbb E_\nu[g]=0
\right\}.
\]
Let \((g_n)_{n\ge1}\subset\mathcal H\) be such that
\(\|g_n\|\to\infty.
\)
Write
\[
g_n=r_nu_n,
\qquad
r_n=\|g_n\|\to\infty,
\qquad
\|u_n\|=1,
\]
where $u_n \in \mathcal H$. Since \(S\) is finite, after extracting a subsequence we may assume that
\(u_n\to u\) in \(\mathbb R^S\). Since \(\mathcal H\) is closed, we have
\(u\in\mathcal H\), and since \(\|u_n\|=1\), we also have \(\|u\|=1\).

For each \(x\in S\),
\[
\log\left(
\sum_{y\in S}K(x,y)e^{r_nu_n(y)}
\right)
=
r_n\max_{y\in S}u_n(y)+O(1),
\]
where the \(O(1)\) term is uniform in \(x\). Indeed, because \(S\) is finite
and \(K(x,y)>0\), there exist constants \(0<k_- \le k_+<\infty\) such that
\[
k_-\le K(x,y)\le k_+,
\qquad x,y\in S.
\]
Thus the logarithmic sum is controlled, up to an additive constant
independent of \(n\) and \(x\), by its largest exponential term. Using \(g_n=r_nu_n\), we obtain
\begin{align*}
\frac{\mathcal J(g_n)}{r_n}
&=
\sum_{x\in S}\mu(x)
\frac{1}{r_n}
\log\left(
\sum_{y\in S}K(x,y)e^{r_nu_n(y)}
\right)
-
\sum_{y\in S}\nu(y)u_n(y) \\
&=
\max_{y\in S}u_n(y)
-
\mathbb E_\nu[u_n]
+
o(1).
\end{align*}
Since \(u_n\in\mathcal H\), we have \(\mathbb E_\nu[u_n]=0\). Passing to the
limit gives
\[
\frac{\mathcal J(g_n)}{r_n}
\to
\max_{y\in S}u(y).
\]
Now \(u\in\mathcal H\), \(\|u\|=1\), and \(\nu(y)>0\) for every \(y\in S\).
Hence \(u\) is not identically zero. Moreover, \(\max_y u(y)\) must be
strictly positive. Indeed, if \(\max_y u(y)\le0\), then \(u(y)\le0\) for all
\(y\), and the condition \(\mathbb E_\nu[u]=0\) with \(\nu>0\) would force
\(u=0\), contradicting \(\|u\|=1\). Therefore
\[
\max_{y\in S}u(y)>0.
\]
It follows that \(\mathcal J(g_n)\to+\infty\). Thus \(\mathcal J\) is
coercive on \(\mathcal H\).

\subsection{Strict convexity}
\label{app:strict-convexity}
We next prove strict convexity modulo constants. By
Proposition~\ref{prop:gradient-J},
\(
\nabla\mathcal J(g)=m_g-\nu.
\)
Hence
\(
\nabla^2\mathcal J(g)=Dm_g.
\)
We compute the differential of the marginal map \(g\mapsto m_g\). Let
\(h:S\to\mathbb R\). Differentiating
\[
p_g(y|x)=\frac{K(x,y)e^{g(y)}}{Z_g(x)}
\]
in the direction \(h\) gives
\[
Dp_g(y|x)[h]
=
p_g(y|x)
\left(
h(y)-\sum_{z\in S}p_g(z|x)h(z)
\right).
\]
Since
\[
m_g(y)=\sum_{x\in S}\mu(x)p_g(y|x),
\]
we obtain
\[
Dm_g(y)[h]
=
\sum_{x\in S}\mu(x)p_g(y|x)
\left(
h(y)-\sum_{z\in S}p_g(z|x)h(z)
\right).
\]
Consequently,
\begin{align*}
\left\langle h,\nabla^2\mathcal J(g)h\right\rangle
&=
\sum_{y\in S}h(y)Dm_g(y)[h],\\
&=
\sum_{x\in S}\mu(x)
\left[
\sum_{y\in S}p_g(y|x)h(y)^2
-
\left(
\sum_{y\in S}p_g(y|x)h(y)
\right)^2
\right], \\
&=
\sum_{x\in S}\mu(x)
\operatorname{Var}_{p_g(\cdot|x)}(h).
\end{align*}
This proves that \(\nabla^2\mathcal J(g)\) is positive semidefinite.
Moreover, since \(K(x,y)>0\), each probability vector \(p_g(\cdot|x)\) has
full support on \(S\). Hence
\(
\operatorname{Var}_{p_g(\cdot|x)}(h)=0
\)
if and only if \(h\) is constant on \(S\). Therefore
\(
\left\langle h,\nabla^2\mathcal J(g)h\right\rangle=0
\)
if and only if
\(
h\in\operatorname{span}\{\mathbf 1\}.
\)
Equivalently,
\[
\operatorname{Ker}\nabla^2\mathcal J(g)
=
\operatorname{span}\{\mathbf 1\}.
\]
Thus \(\mathcal J\) is strictly convex on the quotient
\(\mathbb R^S/\mathbb R\mathbf 1\), and therefore strictly convex on the
gauge-fixed hyperplane \(\mathcal H\).

\subsection{First-order optimality and uniqueness}
\label{app:first-order-optimality}

Since \(\mathcal J\) is continuous and coercive on the finite-dimensional
space \(\mathcal H\), it attains a minimizer \(g^\star\in\mathcal H\). The
strict convexity on \(\mathcal H\), proved above, implies that this minimizer
is unique.

It remains to identify the first-order optimality condition. Since
\(g^\star\) minimizes \(\mathcal J\) over \(\mathcal H\),
\[
\left\langle \nabla\mathcal J(g^\star),h\right\rangle=0,
\qquad
h\in\mathcal H.
\]
By Proposition~\ref{prop:gradient-J},
\(
\nabla\mathcal J(g^\star)=m_{g^\star}-\nu.
\)
Moreover, \(m_{g^\star}\) and \(\nu\) are probability measures, so
\(
\left\langle m_{g^\star}-\nu,\mathbf 1\right\rangle=0.
\)
Thus \(m_{g^\star}-\nu\) is orthogonal to \(\mathcal H\) and also to
\(\mathbf 1\). Since
\(
\mathbb R^S
=
\mathcal H\oplus \operatorname{span}\{\mathbf 1\},
\)
we conclude that
\(
m_{g^\star}-\nu=0.
\)
Hence
\(
m_{g^\star}=\nu.
\)
Therefore \(g^\star\) is the unique gauge-fixed Schrödinger potential. By
Theorem~\ref{thm:finite-duality}, the coupling
\[
\pi_{g^\star}(x,y)=\mu(x)p_{g^\star}(y|x)
\]
solves the static SBP. Finally, the entropy objective
is strictly convex in \(\pi\) on \(\Pi(\mu,\nu)\), and hence this coupling is
the unique solution of the static problem.

\section{Well-Posedness and Convergence of the SBOF}
\label{app:sbof-convergence}

We prove Theorem~\ref{thm:SBOF-convergence}. Throughout this section,
\(g_0 \in\mathcal H\) is fixed, and \(g_t\) denotes the solution of
potential flow \eqref{eq:SBOF} whenever it exists. The proof proceeds by establishing local
well-posedness, gauge preservation, the Lyapunov identity, global
existence, and convergence to the unique gauge-fixed Schrödinger potential.

\subsection{Local well-posedness and gauge preservation}
\label{app:local-wellposedness-gauge}

We prove the local well-posedness of the potential flow \eqref{eq:SBOF} and the
preservation of the gauge constraint. Define the vector field
\begin{equation}
\label{eq:F-g}
F(g)
=
-\log\frac{m_g}{\nu}
+
\mathbb E_\nu\!\left[
\log\frac{m_g}{\nu}
\right]\mathbf 1,
\qquad g\in\mathbb R^S.
\end{equation}

Because each term in \eqref{eq:mg} is strictly positive
(\(\mu(x)>0\), \(K(x,y)>0\), \(e^{g(y)}>0\), \(Z_g(x)>0\)),
we obtain \(m_g(y) > 0\) for every \(g\in\mathbb R^S\) and \(y\in S\). Moreover, the map \(g\mapsto m_g\) is \(C^\infty\) on \(\mathbb R^S\), since
it is obtained from finite sums, exponentials, products, and divisions by
the strictly positive quantities \(Z_g(x)\). Hence
\(g\mapsto \log(m_g/\nu)\) is \(C^\infty\), and therefore \(F\) defined in
\eqref{eq:F-g} is \(C^\infty\) on \(\mathbb R^S\).

By the Cauchy--Lipschitz theorem, for every $g_0\in\mathbb R^S$ the initial-value problem
\begin{equation}
\label{eq:ode-local}
\dot g_t=F(g_t),
\end{equation}
 with $g_{t=0}=g_0$ admits a unique maximal solution on an interval
$[0,T_{\max})$, where $T_{\max}\in(0,\infty]$. It remains to check that the gauge is preserved. Along any solution
of \eqref{eq:ode-local},
\begin{equation}
\label{eq:gauge-preservation}
\frac{d}{dt}\mathbb E_\nu[g_t]
=
\mathbb E_\nu[\dot g_t]
=
-\mathbb E_\nu\!\left[
\log\frac{m_{g_t}}{\nu}
\right]
+
\mathbb E_\nu\!\left[
\log\frac{m_{g_t}}{\nu}
\right]
=
0.
\end{equation}
Thus, by \eqref{eq:gauge-preservation},
\(\mathbb E_\nu[g_t]=\mathbb E_\nu[g_0]\) for all \(t \in [0,T_{\max})\).
In particular, if \(g_0\in\mathcal H\), then
\(g_t\in\mathcal H\) for all \(t\in [0, T_{\max})\).

\subsection{Dissipation identity}
\label{app:dissipation-identity}

We prove the Lyapunov identity for the semi-dual functional. By
Proposition~\ref{prop:gradient-J},
\(
\nabla\mathcal J(g)=m_g-\nu.
\)
Therefore, along a smooth solution of the potential flow \eqref{eq:SBOF},
\begin{align}\label{dJ_t}
\frac{d}{dt}\mathcal J(g_t)
=
\left\langle
\nabla\mathcal J(g_t),\dot g_t
\right\rangle
=
\left\langle
m_{g_t}-\nu,\dot g_t
\right\rangle .
\end{align}
Using the definition of the potential flow  \eqref{eq:SBOF}, we
obtain
\begin{align}\label{mgt}
\left\langle
m_{g_t}-\nu,\dot g_t
\right\rangle
=
-
\left\langle
m_{g_t}-\nu,
\log\frac{m_{g_t}}{\nu}
\right\rangle
+
\mathbb E_\nu\!\left[
\log\frac{m_{g_t}}{\nu}
\right]
\left\langle m_{g_t}-\nu,\mathbf 1\right\rangle .
\end{align}
The gauge correction vanishes, since
\begin{align}\label{gf}
\left\langle m_{g_t}-\nu,\mathbf 1\right\rangle
=
\sum_{y\in S}m_{g_t}(y)-\sum_{y\in S}\nu(y)
=
1-1
=
0.
\end{align}
By combining \eqref{mgt} and \eqref{gf}, equation \eqref{dJ_t} can be rewritten as
\begin{align}
\frac{d}{dt}\mathcal J(g_t)
=&
-
\left\langle
m_{g_t}-\nu,
\log\frac{m_{g_t}}{\nu}
\right\rangle,\notag\\
 =&-\KL(m_{g_t}\|\nu)
-\KL(\nu\|m_{g_t})
\le0.\label{dissipation}
\end{align}
This proves the dissipation identity.

\subsection{Global existence}
\label{app:global-existence}

We now prove that the maximal solution \((g_t)_{t\in[0,T_{\max})}\) of 
potential flow \eqref{eq:SBOF} is global, that is,
\(
T_{\max}=+\infty\). Since the gauge is
preserved, we have
\( g_t\in\mathcal H\) for all $t\in [0,T_{\max})$.
Moreover, by the dissipation identity (\ref{dissipation}),
\begin{align*}
\mathcal J(g_t)\le \mathcal J(g_0),
\qquad
\forall t \in [0,T_{\max}).
\end{align*}
Consequently, for all \(t\in[0,T_{\max})\), the trajectory remains in the
sublevel set
\begin{align*}
g_t\in\mathcal C_{g_0}
:=
\left\{
g\in\mathcal H:
\mathcal J(g)\le \mathcal J(g_0)
\right\}.
\end{align*}
By the coercivity of \(\mathcal J\) on \(\mathcal H\), proved in
Appendix~\ref{app:static-theory}, the sublevel set
\(\mathcal C_{g_0}\) is compact.
Since the vector field $
F(g)$ defined by \eqref{eq:F-g}
is smooth on \(\mathbb R^S\), hence bounded on the compact set
\(\mathcal C_{g_0}\). Therefore the solution cannot leave every compact subset
of \(\mathbb R^S\) as \(t\uparrow T_{\max}\). By the standard continuation
criterion for finite-dimensional ordinary differential equations, this
implies
\(
T_{\max}=+\infty.
\)
Thus the solution exists globally and satisfies
\begin{align}
g_t\in\mathcal H,
\qquad
\mathcal J(g_t)\le \mathcal J(g_0),
\qquad
\forall \; t\ge0.
\end{align}

\subsection{Convergence of the terminal marginal}
\label{app:terminal-marginal-convergence}

We prove that
\begin{align}
m_{g_t}\to \nu
\qquad
\text{as } t\to\infty.
\end{align}
Since \(t\mapsto\mathcal J(g_t)\) is nonincreasing and bounded from below by
\(\mathcal J(g^\star)\), it has a finite limit as \(t\to\infty\). Integrating
the dissipation identity (\ref{dissipation}) over \([0,T]\) gives
\begin{align}\label{int}
\int_0^T
\left[
\KL(m_{g_t}\|\nu)
+
\KL(\nu\|m_{g_t})
\right]dt
=
\mathcal J(g_0)-\mathcal J(g_T).
\end{align}
Letting \(T\to\infty\) in \eqref{int}, we obtain
\begin{align}\label{intint}
\int_0^\infty
\JSD(g_t)\,dt
<\infty,
\end{align}
where
$
\JSD(g)
=
\KL(m_g\|\nu)
+
\KL(\nu\|m_g).$ 

We claim that \(\JSD(g_t)\to0\) as $t\to+\infty$. Indeed, by the global existence argument, the
trajectory $g_t$ remains in the compact set \(\mathcal C_{g_0}\). Since the vector
field \(F\) is bounded on \(\mathcal C_{g_0}\), the curve
\(t\mapsto g_t\) is uniformly Lipschitz on \([0,\infty)\). Moreover, \(g\mapsto\JSD(g)\) is
continuous (indeed smooth) on \(\mathcal C_{g_0}\), because \(m_g(y)>0\) for
all \(y\in S\). Therefore, \(t\mapsto \JSD(g_t)\) is uniformly continuous on
\([0,\infty)\). Combined with the integrability property \eqref{intint}, this implies that
$\JSD(g_t)\to0.$
In particular, $\KL(m_{g_t}\|\nu)\to0$.
Since \(S\) is finite, this implies
$m_{g_t}\to\nu$ as $t\to +\infty$.

\subsection{Convergence of the potential}
\label{app:potential-convergence}

We now prove that
\begin{align}
g_t\to g^\star
\qquad
\text{as } t\to\infty.
\end{align}
The trajectory \((g_t)_{t\ge0}\) solution to the potential flow~\eqref{eq:SBOF} with initial condition $g_0\in \mathcal H$ remains in the compact set
\(\mathcal C_{g_0}\). Let \((t_n)_{n\ge1}\) be any sequence such that
\(t_n\to\infty\). By compactness, there exists a subsequence, still denoted
\((t_n)\), and some \(\bar g\in\mathcal C_{g_0}\) such that
\(
g_{t_n}\to \bar g.
\)
Since the map \(g\mapsto m_g\) is continuous, and since
\(m_{g_t}\to\nu\), we obtain
\(
m_{\bar g}=\nu.
\)
Moreover, \(\bar g\in\mathcal H\) because \(\mathcal H\) is closed and
\(g_t\in\mathcal H\) for all \(t\ge0\).

By Theorem~\ref{thm:exist-unique}, there is a unique element of
\(\mathcal H\) satisfying the Schrödinger marginal equation \(m_g=\nu\),
namely \(g^\star\). Hence
\(
\bar g=g^\star \).
Thus every accumulation point of the trajectory is \(g^\star\). Since the
trajectory is precompact, this implies that the whole trajectory converges:
\(
g_t\to g^\star,
\) as $t\to +\infty$. 
Finally, by continuity of \(g\mapsto m_g\),
\begin{align*}
m_{g_t}\to m_{g^\star}=\nu.
\end{align*}
This completes the proof of Theorem~\ref{thm:SBOF-convergence}.

\section{Convergence of Couplings and Path Measures}
\label{app:path-convergence}

We prove Corollary~\ref{cor:coupling-path-convergence}.  By Theorem~\ref{thm:SBOF-convergence},
\(g_t\to g^\star\) as \(t\to\infty \).
Since
\begin{align*}
\pi_g(x,y)
=
\mu(x)
\frac{K(x,y)e^{g(y)}}{Z_g(x)}
\end{align*}
and \(Z_g(x)>0\), the map \(g\mapsto \pi_g\) is continuous. Therefore, for every
\(x,y\in S\),
\begin{align*}
\pi_{g_t}(x,y)\to \pi_{g^\star}(x,y).
\end{align*}
Since \(S\times S\) is finite, this implies
$
\|\pi_{g_t}-\pi_{g^\star}\|_{\TV}\to0.
$ By Theorem~\ref{thm:exist-unique}, \(m_{g^\star}=\nu\). Therefore
\(\pi_{g^\star}\in\Pi(\mu,\nu)\), and by the static optimality result,
$
\pi_{g^\star}=\pi^\star.
$
Thus
$
\|\pi_{g_t}-\pi^\star\|_{\TV}\to0.
$

We now prove convergence of the corresponding path measures. Recall that
\begin{align*}
\mathbf P^g(d\omega)
=
\sum_{x,y\in S}
\pi_g(x,y)
\mathbf R_\mu(d\omega\mid X_0=x,X_1=y).
\end{align*}
Therefore
\begin{align}\label{pt-pg}
\mathbf P^{g_t}-\mathbf P^{g^\star}
=
\sum_{x,y\in S}
\left(
\pi_{g_t}(x,y)-\pi_{g^\star}(x,y)
\right)
\mathbf R_\mu(\cdot\mid X_0=x,X_1=y).
\end{align}
Taking total variation in~\eqref{pt-pg} and using the triangle inequality gives
\begin{align}\label{normP-P}
\|\mathbf P^{g_t}-\mathbf P^{g^\star}\|_{\TV}
\le
\sum_{x,y\in S}
\left|
\pi_{g_t}(x,y)-\pi_{g^\star}(x,y)
\right|.
\end{align}
The right-hand side of~\eqref{normP-P} tends to zero. Hence
$
\|\mathbf P^{g_t}-\mathbf P^{g^\star}\|_{\TV}\to0.
$
Since \(\mathbf P^{g^\star}=\mathbf P^\star\), we obtain
$
\|\mathbf P^{g_t}-\mathbf P^\star\|_{\TV}\to0.$
This proves the corollary.

\section{Uniform Poincaré Inequality and Exponential Convergence}
\label{app:poincare-exp}

We provide the details behind the uniform coercivity and exponential
relaxation estimates used in Section~\ref{sec:exponential}.

\subsection{Compact sublevel sets}
\label{app:compact-sublevel-sets}

For \(g_0\in\mathcal H\), we recall that
\begin{align*}
\mathcal C_{g_0}
=
\left\{
g\in\mathcal H:
\mathcal J(g)\le \mathcal J(g_0)
\right\}.
\end{align*}
By the coercivity of \(\mathcal J\) on \(\mathcal H\), proved in
Appendix~\ref{app:coercivity}, the set \(\mathcal C_{g_0}\) is compact.
Moreover, by the dissipation identity \eqref{eq:J-dissipation}, every
solution \(g_t\) of the potential flow \eqref{eq:SBOF} with initial condition \(g_0\in \mathcal H\) satisfies
$
g_t\in\mathcal C_{g_0},$ for all $t\ge0$.

\subsection{Uniform Poincaré inequality}
\label{app:uniform-poincare}

We prove Theorem~\ref{thm:uniform-poincare}. 

For fixed \(g\), define
\begin{align}\label{lambdag}
\lambda(g)
=
\inf
\left\{
\langle f,\mathsf K_g f\rangle:
\mathbb E_\nu[f]=0,\;
\|f\|_{L^2(\nu)}=1
\right\}.
\end{align}
The covariance representation gives
\begin{align}\label{quadraticform}
\langle f,\mathsf K_g f\rangle
=
\sum_{x\in S}\mu(x)\operatorname{Var}_{p_g(\cdot|x)}(f), \qquad f\in \mathbb R^S.
\end{align}
Since \(K(x,y)>0\), each \(p_g(\cdot|x)\) has full support on \(S\). Hence
the quadratic form~\eqref{quadraticform} vanishes if and only if \(f\) is constant. If
\(\mathbb E_\nu[f]=0\), this implies \(f=0\). Therefore
\(
\lambda(g)>0
\)
for every fixed \(g\).

The map \(g\mapsto \mathsf K_g\) is continuous on \(\mathbb R^S\). We now
show that \(g\mapsto \lambda(g)\) is continuous. Let
\begin{align}\label{Snu}
\mathbb S_\nu
=
\left\{
f\in\mathcal H:\|f\|_{L^2(\nu)}=1
\right\}.
\end{align}
Then by~\eqref{lambdag} and~\eqref{Snu}, we can write
\begin{align}
\lambda(g)
=
\inf_{f\in\mathbb S_\nu}
\langle f,\mathsf K_g f\rangle.
\end{align}
For \(g,g'\in\mathbb R^S\) and \(f\in\mathbb S_\nu\),
\begin{align}
\left|
\langle f,\mathsf K_g f\rangle
-
\langle f,\mathsf K_{g'} f\rangle
\right|=& \left|
\langle f,(\mathsf K_g- \mathsf K_{g'})f\rangle
\right|,\notag\\
\le&
\|\mathsf K_g-\mathsf K_{g'}\|_{L^2(\nu)\to L^2(\nu)}.\label{lambdagg'}
\end{align}
Taking infima in~\eqref{lambdagg'} and exchanging the roles of \(g\) and \(g'\) gives
\begin{align}
|\lambda(g)-\lambda(g')|
\le
\|\mathsf K_g-\mathsf K_{g'}\|_{L^2(\nu)\to L^2(\nu)}.
\end{align}
Thus \(g\mapsto\lambda(g)\) defined by~\eqref{lambdag} is continuous.  Since
\(\mathcal C_{g_0}\) is compact, the minimum
\begin{align*}
\lambda_{g_0}
=
\min_{g\in\mathcal C_{g_0}}\lambda(g)=\lambda(\tilde g)
\end{align*}
is attained for some $\tilde g\in \mathcal C_{g_0}$ and hence it is strictly positive. Hence, for every
\(g\in\mathcal C_{g_0}\) and every \(f\) with \(\mathbb E_\nu[f]=0\),
\begin{align*}
\langle f,\mathsf K_g f\rangle
\ge
\lambda_{g_0}\|f\|_{L^2(\nu)}^2.
\end{align*}
This proves the uniform Poincaré inequality~\eqref{eq:PI} of Theorem~\ref{thm:uniform-poincare}. .

\subsection{Entropy--variance comparison}
\label{app:entropy-variance}

We prove Lemma~\ref{lem:entropy-variance}. \\
Since \(g\mapsto m_g\) is
continuous and \(\mathcal C_{g_0}\) is compact, the set
\begin{align*}
\{m_g \in \mathcal P(S):\,g\in\mathcal C_{g_0}\}
\end{align*}
is a compact subset of the interior of the probability simplex  $\mathcal P(S)$. Consequently, there exist
constants \(0<a_{g_0}\le b_{g_0}<\infty\) such that
\begin{align}\label{ab}
a_{g_0}
\le
\frac{m_g(y)}{\nu(y)}
\le
b_{g_0},
\qquad
g\in\mathcal C_{g_0},\ y\in S.
\end{align}
Set
\begin{align*}
r_g=\frac{m_g}{\nu},
\qquad
\ell_g=\log r_g,
\qquad
\bar\ell_g=\mathbb E_\nu[\ell_g].
\end{align*}
Since \(\mathbb E_\nu[r_g]=1\), it follows that
\begin{align}\label{KLmg}
\KL(m_g\|\nu)
=
\mathbb E_\nu[r_g\log r_g]
=
\mathbb E_\nu[\Phi(r_g)],
\end{align}
where
\(
\Phi(r)=r\log r-r+1\).  By Taylor's theorem and~\eqref{ab},
\begin{align}\label{Phir}
\Phi(r)
=
\frac12\Phi''(\xi)(r-1)^2
\le
\frac{1}{2a_{g_0}}(r-1)^2,
\end{align}
for some \(\xi\) between \(r\) and \(1\), since
\(
\Phi''(r)=1/r
\)
and \(r\ge a_{g_0}\). Hence one may choose
\(
C_1=(2a_{g_0})^{-1}.
\)
Therefore, by~\eqref{KLmg} and~\eqref{Phir}, we obtain
\begin{align}\label{KLmgg}
\KL(m_g\|\nu)
\le
C_1\|r_g-1\|_{L^2(\nu)}^2.
\end{align}
Since \(r_g\in[a_{g_0},b_{g_0}]\), the mean value theorem yields
\begin{align}\label{rg-rg}
|r_g(y)-r_g(z)|
\le
b_{g_0}|\ell_g(y)-\ell_g(z)|,
\qquad y,z\in S.
\end{align}
Taking the variance in~\eqref{rg-rg} with respect to \(\nu\), we obtain
\begin{align}\label{Varrg}
\operatorname{Var}_\nu(r_g)
\le
b_{g_0}^2\operatorname{Var}_\nu(\ell_g).
\end{align}
Since \(\mathbb E_\nu[r_g]=1\), we have $
\operatorname{Var}_\nu(r_g)
=
\|r_g-1\|_{L^2(\nu)}^2.
$
Hence, \eqref{Varrg} can be written as
\begin{align}\label{rgrgrg}
\|r_g-1\|_{L^2(\nu)}^2
\le
b_{g_0}^2
\|\ell_g-\bar\ell_g\|_{L^2(\nu)}^2.
\end{align}
Combining \eqref{KLmgg} and  \eqref{rgrgrg} gives
\begin{align*}
\KL(m_g\|\nu)
\le
C_{g_0}
\left\|
\log\frac{m_g}{\nu}
-
\mathbb E_\nu\!\left[
\log\frac{m_g}{\nu}
\right]
\right\|_{L^2(\nu)}^2
\end{align*}
for some constant \(C_{g_0}>0\). This proves Lemma~\ref{lem:entropy-variance}.

\subsection{Proof of Theorem~\ref{thm:exponential-convergence}}
\label{app:proof-exponential-convergence}

Let \(g_t\) be the solution of potential flow  \eqref{eq:SBOF} with initial condition $g_0\in \mathcal H$, and set
\begin{align*}
m_t=m_{g_t},
\qquad
h_t=\log\frac{m_t}{\nu}.
\end{align*}
By Proposition~\ref{prop:marginal-dissipation},
\begin{align}\label{klklklk}
\frac{d}{dt}\KL(m_t\|\nu)
=
-\langle h_t,\mathsf K_{g_t}h_t\rangle.
\end{align}
Since \(\mathsf K_{g_t}\mathbf 1=0\), we may subtract the \(\nu\)-mean of
\(h_t\) in~\eqref{klklklk}:
\begin{align}\label{h_tkgt}
\langle h_t,\mathsf K_{g_t}h_t\rangle
=
\left\langle
h_t-\mathbb E_\nu[h_t],
\mathsf K_{g_t}
\left(h_t-\mathbb E_\nu[h_t]\right)
\right\rangle.
\end{align}
Since \(g_t\in\mathcal C_{g_0}\), the uniform Poincaré inequality~\eqref{eq:PI}  with~\eqref{h_tkgt} gives
\begin{align}\label{cg0}
\langle h_t,\mathsf K_{g_t}h_t\rangle
\ge
\lambda_{g_0}
\left\|
h_t-\mathbb E_\nu[h_t]
\right\|_{L^2(\nu)}^2.
\end{align}
Moreover, by the entropy--variance comparison gives
\begin{align}\label{entvar}
\left\|
h_t-\mathbb E_\nu[h_t]
\right\|_{L^2(\nu)}^2
\ge
\frac{1}{C_{g_0}}
\KL(m_t\|\nu).
\end{align}
Combining  estimates ~\eqref{klklklk}, \eqref{klklklk} and \eqref{cg0} yields
\begin{align}
\frac{d}{dt}\KL(m_t\|\nu)
\le
-
\frac{\lambda_{g_0}}{C_{g_0}}
\KL(m_t\|\nu).
\end{align}
Applying Grönwall's inequality, we obtain the desired exponential relation~\eqref{eq:exp-convergence}.
This proves the theorem.

\section{Doob Transform and Numerical Details}
\label{app:numerics-doob}
This appendix provides the technical details underlying the dynamic
reconstruction and numerical illustration presented in Section~\ref{sec:dynamic-reconstruction}.
The first part shows that the Doob transform associated with a terminal
potential \(g\) yields the path measure obtained by lifting the endpoint
coupling \(\pi_g\) via the reference Markov bridges. The second part
summarizes the numerical scheme and the diagnostics used to evaluate the potential flow.

\subsection{Path-space reconstruction by Doob transform}
\label{app:doob-transform}

Let $g:S\to\mathbb{R}$, and define
\begin{equation}\label{phis}
\varphi_s^g=e^{(1-s)Q}e^g,
\qquad s\in[0,1].
\end{equation}
Then $\varphi_s^g(x)>0$ for all $s\in[0,1]$ and $x\in S$. Moreover, the function
$\varphi_s^g$ defined in~\eqref{phis} is the unique solution to the backward equation
\begin{equation*}
\partial_s\varphi_s^g=-Q\varphi_s^g,
\end{equation*}
with terminal condition $\varphi_{s=1}^g=e^g$.
For $x\neq y$, define
\begin{equation*}
Q_s^g(x,y)
=
Q(x,y)\frac{\varphi_s^g(y)}{\varphi_s^g(x)},
\end{equation*}
and choose the diagonal entries $Q_s^g(x,x)=-\sum_{y\neq x}Q_s^g(x,y)$ so that each row sums to zero. The family
$(Q_s^g)_{s\in[0,1]}$ constitutes the space-time Doob transform of the reference
generator $Q$, a classical construction in the theory of Markov bridges and SB; see, for instance,
\cite{doob1957conditional,jamison1975markov,leonard2014survey}.

We next verify that the resulting time-inhomogeneous Markov chain has endpoint
conditional law $p_g(\cdot|x)$. Let $X_t$ denote the state of the reference Markov process at time $t \in [0,1]$. By the Markov property and the
definition of $\varphi_s^g$ in~\eqref{phis},
\begin{equation}
\mathbb{E}_{\mathbf{R}}\left[e^{g(X_1)}\mid X_s=x\right]
=
\varphi_s^g(x).
\end{equation}
Let $\mathbf{P}_x^g$ denote the law of the Doob-transformed process initialized at $X_0=x$. The corresponding path measure on $[0,1]$ is therefore given with respect to the reference measure $\mathbf{R}_x$ by
\begin{equation*}
\frac{d\mathbf{P}_x^g}{d\mathbf{R}_x}
=
\frac{e^{g(X_1)}}{\varphi_0^g(x)}.
\end{equation*}
It follows that
\begin{equation*}
\mathbf{P}_x^g(X_1=y)
=
\frac{\mathbf{R}_x(X_1=y)e^{g(y)}}{\varphi_0^g(x)}
=
\frac{K(x,y)e^{g(y)}}{\sum_z K(x,z)e^{g(z)}}
=
p_g(y|x).
\end{equation*}
Hence, if $X_0\sim\mu$, the corresponding endpoint coupling is
\begin{equation*}
\mu(x)p_g(y|x)=\pi_g(x,y).
\end{equation*}
It remains to identify the conditional bridges. Since the Radon--Nikodym
derivative $e^{g(X_1)}/\varphi_0^g(X_0)$ depends only on the endpoints,
conditioning on $(X_0,X_1)=(x,y)$ cancels out this factor. Consequently, the
Doob-transformed process and the reference process share the same conditional
law given the endpoints:
\begin{equation*}
\mathbf{P}^g(\cdot\mid X_0=x,X_1=y)
=
\mathbf{R}_\mu(\cdot\mid X_0=x,X_1=y).
\end{equation*}
Therefore, integrating over the initial and terminal states yields
\begin{equation*}
\mathbf{P}^g(d\omega)
=
\sum_{x,y\in S}
\pi_g(x,y)
\mathbf{R}_\mu(d\omega\mid X_0=x,X_1=y).
\end{equation*}
In particular, when $g=g^\star$, we have $m_{g^\star}=\nu$, and the
corresponding path measure exactly coincides with the SB path
measure.

\subsection{Numerical implementation details}
\label{app:implementation-details}

We describe the numerical choices used in Section~\ref{sec:dynamic-reconstruction}.
The state space is a finite two-dimensional grid $S$, and the reference
generator $Q$ is chosen as a local continuous-time random-walk generator on
this grid. The terminal transition kernel is given by
$
K=e^Q.
$
All computations are performed in finite dimension. The positivity assumption
is ensured by choosing an irreducible generator and a strictly positive time
horizon.

The potential flow in~\eqref{eq:SBOF} is discretized on a finite time interval
$[0,T]$. Given $N_{\rm it}\in\mathbb{N}^*$, we set
\begin{equation}
\Delta t=\frac{T}{N_{\rm it}},
\qquad
t_k=k\Delta t,
\qquad
k=0,\ldots,N_{\rm it}.
\end{equation}
Starting from $g_0\in\mathcal{H}$, we compute the approximations
$g_k\approx g_{t_k}$ using the explicit Euler scheme
\begin{equation}\label{ggggg}
g_{k+1}
=
g_k
-
\Delta t
\left(
\log\frac{m_{g_k}}{\nu}
-
\mathbb{E}_\nu\!\left[
\log\frac{m_{g_k}}{\nu}
\right]\mathbf{1}
\right).
\end{equation}
After each iteration~\eqref{ggggg}, we project onto the gauge-fixed space by setting
\begin{equation}\label{gk+1}
g_{k+1}
\leftarrow
g_{k+1}
-
\mathbb{E}_\nu[g_{k+1}]\mathbf{1}.
\end{equation}
The projection step in~\eqref{gk+1} merely removes numerical drift in the additive gauge
direction and leaves both the coupling $\pi_{g_{k+1}}$ and the marginal
$m_{g_{k+1}}$ unchanged.

The endpoint coupling is then given by
\begin{equation*}
\pi_{g_k}(x,y)
=
\mu(x)
\frac{K(x,y)e^{g_k(y)}}{\sum_{z\in S}K(x,z)e^{g_k(z)}}.
\end{equation*}
As a high-accuracy reference, we run the classical Sinkhorn/IPF iterations
until the marginal residual falls below the prescribed tolerance. We then
report the terminal relative entropy, total variation error, maximal
logarithmic error, and coupling error:
\begin{equation*}
\KL(m_{g_T}\|\nu),
\qquad
\|m_{g_T}-\nu\|_{\TV},
\qquad
\max_{y\in S}\left|\log\frac{m_{g_T}(y)}{\nu(y)}\right|,
\qquad
\|\pi_{g_T}-\pi^\star\|_{\TV}.
\end{equation*}
The Onsager entropy-production diagnostic is evaluated through the covariance
formula
\begin{equation*}
\mathcal{I}_{g_k}(m_{g_k}\|\nu)
=
\sum_{x\in S}
\mu(x)
\operatorname{Var}_{p_{g_k}(\cdot|x)}
\left(
\log\frac{m_{g_k}}{\nu}
\right),
\end{equation*}
thereby avoiding the explicit construction of the full matrix $\mathsf{K}_{g_k}$.

\end{document}